\newtheorem{theorem}{Theorem}
\newtheorem{prop}[theorem]{Proposition}
\newtheorem{cor}[theorem]{Corollary}
\newtheorem{lemma}[theorem]{Lemma}
\newcommand{\ZZ}{\mathbb{Z}}
\newcommand{\CC}{\mathbb{C}}
\newcommand{\G}{\Gamma}
\newcommand{\PP}{\mathbb{P}}
\newcommand{\SLZ}{\operatorname{SL}_2(\ZZ)}
\newcommand{\ord}{\operatorname {ord}}
\newcommand{\g}{{\gamma}}
\let\a\alpha      
  \let\g\gamma
\def\Gal{{\rm Gal}}
\def\F{{\mathbb F}}
\def\Z{{\mathbb Z}}
\def\Q{{\mathbb Q}}
\def\G{\Gamma}
\def\ord{{\rm ord}}
\begin{document}

\title[Noncongruence Modular Forms II]{On  modular forms for some noncongruence
subgroups of $\SLZ$ II}
\author{Chris Kurth}
\address{Department of Mathematics\\Iowa State University\\Ames, IA 50011 \\USA}
\email{kurthc@iastate.edu}
\author{Ling Long}

\address{Department of Mathematics\\Iowa State University\\Ames, IA 50011 \\USA}
\email{linglong@iastate.edu}
\begin{abstract}

In  this paper we show  two classes of noncongruence subgroups
 satisfy the so-called
unbounded denominator property.  In particular, we establish our
conjecture in \cite{Kurth-Long06} which says that every type II
noncongruence character group of $\G^0(11)$ satisfies the unbounded
denominator property.
\end{abstract}
\subjclass[2000]{11F11} \thanks{Both authors are supported in part
by the NSA grant \#H98230-08-1-0076.}

\maketitle

\section{Introduction}
It is well-known that the modular group $\SLZ$ fails to satisfy the
so-called congruence property. As a matter of fact, the majority of
finite index subgroups of the modular group are noncongruence.
Identifying congruence  subgroups of the modular group is a
fundamental question. Although there are explicit algorithms
available for this purpose  \cite{Lang-cong-algorithm, hsu96}, they
require very specific data of the group and hence are not always
effective. Another plausible approach is via the modular forms for
these groups. For instance, if a finite index subgroup $\G$ of the
modular group has genus 0, then knowing that any of its Hauptmoduls
is congruence (in the sense that it is invariant under a congruence
subgroup) is sufficient to conclude that $\G$ is congruence. For
many interesting cases, the coefficients of these Hauptmoduls are
algebraic or combinatorial. A classical example is that the Fourier
coefficients of the modular $j$-function are related to the
dimensions  of the irreducible representations of the monster group.
Consequently, these Hauptmoduls have algebraically integral Fourier
coefficients. A general belief is that a meromorphic modular form
with algebraically integral Fourier coefficients must be congruence.
 It is worth mentioning that if this is the case then
the graded dimension of any $C_2$-cofinite, holomorphic vertex
operator algebra over $\mathbb C$ is a congruence modular function
(cf. \cite{DLM00} and \cite[Section 4]{kohnen-mason08}).

In this paper, we will restrict ourselves to a class of
noncongruence subgroups, called noncongruence character groups,
which are closely related to congruence subgroups. A group $\G$ is
called a character group of another finite index group $\G^0$ of the
modular group if $\G$ is normal in $\G^0$ with finite abelian
quotient.  By the definition, there is a surjective homomorphism
\begin{equation}
  \phi: \G^0 \twoheadrightarrow G
\end{equation} such that $\G=\ker \phi$ for some finite abelian group $G$. Note that   there exists another surjective homomorphism $\pi:
\G^0 \twoheadrightarrow H_1(X_{\G^0}, \ZZ)$, the first homology
group of  the compactified modular curve $X_{\G^0}$
 for $\G^0$ (cf. \cite[Prop. 1.6]{Manin72}).  In \cite{Kurth-Long06}, we distinguish two types of character
groups based on the level structures. A character group $\G$ of
$\G^0$ is said to be of \emph{type II } if the modular curve for
$\G$ is a finite covering of $X_{\G^0}$ unramified at the cusps.
Otherwise the group $\G$ is said to be of \emph{type I}. In
particular, a character group $\G$ of $\G^0$ is said to be of
\emph{type II(A)} if $\phi$ factors through the kernel of $\pi$; and
is said to be be \emph{type I(A)}, if the modular curve for $\G$ is
a finite covering of $X_{\G^0}$ unramified outside of the cusps of
$X_{\G^0}$ and $\pi(\ker \phi)= H_1(X_{\G^0},\ZZ)$. In the case that
$\G$ is a type I(A) character group of $\G^0$ with $\G^0/\G$
isomorphic to $\ZZ/n\ZZ$, the field of meromorphic modular functions
for $\G$ is a cyclic field extension of that for $\G^0$ which can be
generated by $\sqrt[n]{f}$ for some modular function $f$ for $\G^0$
whose zeros and poles are located at the cusps of $\G^0$. A modular
function $f$ with zeros and poles only at the cusps is called a
modular unit (cf. \cite{Kubert-Lang-81}).

A noncongruence subgroup $\G$ is said to satisfy the
\textbf{condition (UBD)} if the following conditions hold:
\begin{quote}
  \emph{If $f$ is an integral weight modular form for
        $\Gamma$    such that
  \begin{enumerate}
  \item $f$ is holomorphic on the upper half plane with poles only  at the
  cusps;
    \item $f$ has algebraic Fourier coefficients at infinity;
    \item $f$ is not a modular form for  $\Gamma^c$, the congruence
    closure of $\G$ in $\SLZ$,
  \end{enumerate}
  then $f$ has \textbf{unbounded denominators}, i.e. there is no algebraic
  integer $A\neq 0$ such that $A \cdot f$ has algebraic integer
  coefficients at infinity.}
\end{quote}
 It is conjectured that every noncongruence group
satisfies (UBD). If the conjecture is true, it provides a clear and
nice criterion for identifying which modular forms with algebraic
coefficients are congruence.

In this short note we prove the following  two results using a
similar argument which is derived from our previous discussion in
\cite{Kurth-Long06}:
\begin{theorem}\label{thm:main}
Let $\G^0=\G_0(M)$ with $M$  a square-free positive integer whose
genus is at  least 1. Then every type I(A) noncongruence character
group $\G$ of $\G_0(M)$ satisfies the condition (UBD).
\end{theorem}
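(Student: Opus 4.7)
The plan is to argue by contradiction, reducing the question to a concrete $p$-adic assertion about fractional eta quotients. Suppose $\Gamma$ fails (UBD): there is a nonzero modular form $g$ for $\Gamma$ satisfying (1) and (2), not modular for $\Gamma^c$, whose Fourier coefficients at $\infty$ have uniformly bounded denominators. By the description of type I(A) character groups in the introduction, the function field of $\Gamma$ is a cyclic degree-$n$ extension of that of $\Gamma^0=\Gamma_0(M)$ generated by $h:=f^{1/n}$, where $f$ is a modular unit on $\Gamma_0(M)$ and $n=[\Gamma^0:\Gamma]$. Under the Galois action of $\Gamma^0/\Gamma\cong\ZZ/n\ZZ$, decompose
\[
    g \;=\; \sum_{j=0}^{n-1} F_j\, h^j,
\]
with each $F_j$ a meromorphic modular form on $\Gamma_0(M)$ of the same weight as $g$. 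The Galois action on the $q$-expansion at $\infty$ scales $h$ by an $n$-th root of unity, so the eigencomponents $F_j h^j$ can be separated by their residues modulo $\ZZ/n$ in the $q^{1/n}$-exponents, and each inherits the bounded-denominator property (up to a root of unity factor). At least one $F_j h^j$ with $j\neq 0$ is nonzero, for otherwise $g=F_0$ would be modular on $\Gamma_0(M)\subseteq\Gamma^c$.

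Fix such a $j$. Since $F_j$ is a meromorphic modular form on the congruence group $\Gamma_0(M)$, standard rationality theory gives a fixed number field containing its Fourier coefficients at $\infty$, with uniformly bounded denominators. Dividing, we conclude that $f^{j/n}$ has a $q$-expansion with bounded denominators. By the Kubert--Lang--Ligozat classification of modular units on $\Gamma_0(M)$ for squarefree $M$, one has
\[
    f \;=\; c\prod_{d\mid M}\eta(d\tau)^{r_d},\qquad r_d \in \ZZ,\ c\in\Qbar^\times,
\]
and therefore
\[
    f^{j/n} \;=\; c^{j/n}\,q^{s/24}\prod_{d\mid M}\prod_{m\ge 1}(1-q^{dm})^{jr_d/n},\qquad s = j\sum_{d\mid M} d r_d.
\]
The problem thus collapses to analyzing this explicit infinite product.

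The heart of the argument is the claim that the Fourier coefficients of $\prod_{d\mid M}\prod_{m\ge 1}(1-q^{dm})^{jr_d/n}$ have bounded denominators if and only if $n\mid jr_d$ for every $d\mid M$. The ``if'' direction is immediate: $f^{j/n}$ is then a genuine integer eta quotient, hence modular on a congruence subgroup, which forces $g$ to be modular on $\Gamma^c$, contradicting the hypothesis. For the ``only if'' direction, pick a prime $p$ dividing the denominator of some $jr_d/n$ and expand each factor through the generalized binomial series $\binom{jr_d/n}{\ell}$, whose $p$-adic valuations tend to $-\infty$; then check that no $p$-adic cancellation can occur across factors. The squarefree hypothesis on $M$ is decisive here: distinct $d\mid M$ contribute to arithmetic progressions of $q$-exponents that are sufficiently disjoint to preclude any cancellation restoring $p$-integrality. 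The genus $\geq 1$ hypothesis enters via the homology map $\pi:\Gamma^0\twoheadrightarrow H_1(X_{\Gamma^0},\ZZ)$ of the introduction, guaranteeing that genuinely noncongruence type I(A) character groups exist in the first place. I expect the $p$-adic non-cancellation estimate to be the principal obstacle; the preceding reductions follow the formalism already developed in \cite{Kurth-Long06}.
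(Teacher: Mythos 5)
Your proposal has the right broad outline but contains several genuine gaps, the most serious of which is a misreading of the hypothesis.

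\textbf{The cyclic reduction is unjustified.} You assert that $\frak M_\G$ is a cyclic degree-$n$ extension of $\frak M_{\G_0(M)}$ generated by a single $n$-th root $h=f^{1/n}$ of a modular unit. This is only what the introduction claims in the special case $\G^0/\G\cong\ZZ/n\ZZ$; in general $\G^0/\G$ is an arbitrary finite abelian group, and the paper works instead with a decomposition $\G=\bigcap_i\G_i$ into cyclic prime-power pieces (Lemma~\ref{lem:3}) and a multi-index basis $S=\{\prod_i g_i^{n_i}\}$ of $\frak M_\G$ over $\frak M_{\G_0(M)}$. Once you have several independent generators, a single nonzero noncongruence eigencomponent $a_I g^I$ is not enough: the paper must still deal with sums $\sum_{I\in S_n} a_I g^I$ with $M((h)_n)>1$, and this is where the differential operator $\mathcal D$ and the inductive descent from Lemma~13 of \cite{Kurth-Long06} do real work. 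Your argument has no replacement for this step.

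\textbf{The ``dividing'' step is not justified.} You pass from ``$F_j h^j$ has bounded denominators and $F_j$ is a meromorphic modular form on a congruence group'' to ``$h^j=f^{j/n}$ has bounded denominators.'' But dividing a bounded-denominator series by another bounded-denominator series does not in general preserve bounded denominators. The paper handles this carefully in Case~1 by noting $(g^I)^{|\G_0(M)/\G|}$ is an eta quotient (bounded denominators in both directions) and invoking Lemma~39 of \cite{Kurth-Long06} to extract bounded denominators of $g^I$ itself.

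\textbf{The key UBD lemma is left unproved, and the route you sketch is not the paper's.} You identify correctly that the crux is showing $f^{j/n}$ has unbounded denominators when $n\nmid jr_d$ for some $d$, but you propose a direct $p$-adic non-cancellation estimate across the binomial expansions of the factors, with the squarefree hypothesis doing the disjointness work, and you acknowledge this is the principal obstacle you have not overcome. The paper avoids this entirely: it uses Lemma~\ref{lem:ubd-h} (if any coefficient of $\sqrt[p]{\text{integer series}}$ fails to be $p$-integral, then the denominators are unbounded) together with Proposition~2.1 of \cite{Bruinier-Kohnen-Ono04}, which gives a \emph{unique} expression of any normalized $q$-series as a formal infinite product $q^r\prod(1-q^n)^{c(n)}$. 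If $\sqrt[p]{\prod\eta(a_jz)^{e_j}}$ had integer Fourier coefficients, the $c(n)$'s would be integers; but computing the infinite product form directly gives a non-integral exponent $e_{n_0}/p$ at $n=a_{n_0}$, a contradiction. This sidesteps any cancellation analysis. Note also that the squarefree hypothesis is used only for Takagi's theorem (every modular unit of $\G_0(M)$ is an eta quotient), not for any disjointness of exponent progressions.

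In short: your reduction is on the right track as far as it goes, but it needs the non-cyclic machinery, the Lemma-39-style trick for extracting bounded denominators of $g^I$, and a proof of the eta-quotient-root UBD lemma (for which the BKO uniqueness theorem is the efficient tool).
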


\begin{theorem}\label{thm:main2}
  Let $\G^0$ be a genus 1 congruence subgroup whose modular curve has no complex multiplication.
  Then there exists an integer $M(\G^0)$ depending on $\G^0$ such that for any positive integer $n$ relatively prime to $M(\G^0)$, every index-$n$ type II(A) character group
  of $\G^0$ satisfies the condition (UBD).
\end{theorem}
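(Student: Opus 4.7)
The plan is to argue by contradiction, reducing the question to the Galois arithmetic of isogenies of the non-CM elliptic curve $E_0 := X_{\G^0}$. The type II(A) hypothesis says the surjection $\phi \colon \G^0 \twoheadrightarrow G$ factors through $\pi \colon \G^0 \twoheadrightarrow H_1(X_{\G^0}, \ZZ) \cong \ZZ^2$, so $X_\G \to X_{\G^0}$ is an unramified degree-$n$ cover of elliptic curves; writing $E := X_\G$, this exhibits $E$ as isogenous to $E_0$ via an isogeny whose kernel $K \cong G$ sits inside $E_0[n]$. The goal is to show that if $\G$ is noncongruence (otherwise condition (3) makes (UBD) vacuous) and admits a modular form $f$ violating (UBD), then $K$ must be a Galois-stable subgroup of $E_0[n]$ of order $n$, which is essentially impossible for $n$ coprime to a suitable $M(\G^0)$ supplied by Serre's open image theorem.

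Concretely, assume for contradiction that $\G$ is noncongruence and that $f$ is a modular form for $\G$ satisfying (1)--(3) with $A \cdot f$ having algebraic integer Fourier coefficients at infinity, for some nonzero algebraic integer $A$. Following the strategy underlying the proof of Theorem \ref{thm:main}, the bounded denominator hypothesis---combined with the explicit presentation of $\CC(X_\G)$ as a degree-$n$ abelian extension of $\CC(X_{\G^0})$ provided by the type II(A) assumption, and with the $q$-expansion principle on an integral model of $X_{\G^0}$---is used to force the isogeny $E \to E_0$ (and hence its kernel $K \subset E_0[n]$) to be defined over a number field of bounded ramification depending only on $\G^0$. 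Equivalently, $K$ is stable under an open subgroup of $\Gal(\Qbar/\QQ)$ whose index depends only on $\G^0$.

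We then invoke Serre's open image theorem: since $E_0$ has no complex multiplication, the adelic Galois representation $\rho \colon \Gal(\Qbar/\QQ) \to \operatorname{GL}_2(\widehat{\ZZ})$ has open image, so there exists an integer $M(\G^0)$, depending only on $\G^0$, such that for every $n$ coprime to $M(\G^0)$ the mod-$n$ representation $\rho_n$ is surjective onto $\operatorname{GL}_2(\ZZ/n\ZZ)$. An elementary check shows that the only subgroups of $(\ZZ/n\ZZ)^2$ of order $n$ stable under the full $\operatorname{GL}_2(\ZZ/n\ZZ)$-action are either trivial or, when $n = m^2$ is a perfect square, the full $m$-torsion $E_0[m]$. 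The trivial case is excluded by $n \geq 2$; in the exceptional perfect square case, one verifies separately that the corresponding character group $\G = \pi^{-1}(m \cdot H_1(X_{\G^0}, \ZZ))$ contains a principal congruence subgroup (and hence is itself congruence), again contradicting the noncongruence assumption on $\G$.

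The main obstacle, I expect, is the second step: rigorously linking the bounded denominator hypothesis on $f$ to the rationality of the isogeny kernel $K$ over a controlled number field. This is the technical heart of the argument and will require a careful adaptation of the modular-unit analysis of \cite{Kurth-Long06} from the type I(A) setting (where $\CC(X_\G)$ is a simple radical extension $\CC(X_{\G^0})(u^{1/n})$ of a modular unit) to the type II(A) setting, where the extension is still abelian but parametrized by the isogeny lattice rather than by a single cuspidal unit. Transporting analytic integrality of Fourier coefficients into algebraic rationality of isogeny data is the step most likely to demand nontrivial new input.
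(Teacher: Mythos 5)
Your proposal reverses the logical flow of the paper's argument, and the reversal creates a genuine gap at the step you yourself flag as the ``technical heart.'' You propose: assume a bounded-denominator form $f$ for $\G$ exists and satisfies (1)--(3), then somehow deduce that the isogeny kernel $K\subset E_0[n]$ is stable under an open subgroup of $\Gal(\Qbar/\QQ)$ of bounded index, and finally invoke Serre's open image theorem to get a contradiction. But the Galois orbit of $K$ is an intrinsic invariant of the subgroup $\G$ alone --- it is determined by which sublattice of $H_1(X_{\G^0},\ZZ)$ you pulled back, before any modular form is ever introduced. The mere existence of one bounded-denominator form for $\G$ cannot retroactively change the Galois arithmetic of $K$. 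So the implication ``bounded denominators $\Rightarrow$ $K$ Galois-stable'' is not just unproven in your write-up; it is the sort of statement one should be suspicious of on purely structural grounds, and it is at least as hard as the theorem itself. (A secondary unaddressed point: even if $K$ were $\operatorname{GL}_2$-stable, the claim that $\G=\pi^{-1}(m\cdot H_1(X_{\G^0},\ZZ))$ is automatically congruence for a non-CM $X_{\G^0}$ is asserted but not verified.)

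The paper proceeds in the opposite direction, ``from the generators up.'' Using Lemma \ref{lem:3} one writes $\G=\bigcap_i\G_i$ with $\G^0/\G_i\cong\ZZ/p_i^{e_i}\ZZ$. For each prime $p=p_i$ dividing $n$, one chooses a modular function $f_P$ with divisor $p(\infty)-p(P)$ for a $p$-torsion point $P$, so that $g_P=\sqrt[p]{f_P}$ generates $\frak M_{G_i}$ over $\frak M_{\G^0}$. Serre's open image theorem is used to guarantee that $\varphi_p$ is surjective for $p\nmid M(\G^0)$, hence $\Gal(\Qbar/\QQ)$ acts transitively on $X_{\G^0}[p]$; combined with the division polynomial $\psi_p$ of equation \eqref{psi}, this shows $x(P)$ is not $\wp$-integral for some $\wp\mid p$, forcing $(x-x(P))^{-1}$ --- and hence $(f_Pf_{-P})^{1/p}$ and at least one of $g_P, g_{-P}$ --- to have unbounded denominators. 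One then expands a putative bounded-denominator form $h=\sum_I a_I g^I$ in the basis $S=\{\prod_i g_i^{n_i}\}$ and runs the differential-operator induction of Lemma 13 of \cite{Kurth-Long06} (as in the proof of Theorem \ref{thm:main}) to reduce to a single term $a_Ig^I$ satisfying (FS-AB), contradicting the unboundedness of the $g_i$'s. Serre's theorem enters only to guarantee that \emph{every} choice of $p$-torsion point $P$ yields a non-integral $x(P)$, not to classify Galois-stable kernels. You should keep the bottom-up structure: the object whose denominators you must control is the function-field generator $g_P$, and the Galois transitivity is what makes that control uniform over all possible $P$.
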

This result overrides  Theorem 3 of \cite{Kurth-Long06} when the
modular curve  $X_{\G^0}$ has no complex multiplication. For
instance,  $X_{\G^0(11)}$ has no complex multiplication. Cummins and
Pauli have classified all congruence subgroups up to genus 25
\cite{cummins-pauli03}. Using their database together with a
computational package like MAGMA one can check explicitly which
genus 1 congruence subgroups satisfy the condition of Theorem
\ref{thm:main2}.

 As a corollary, we will prove Conjecture 37 of
\cite{Kurth-Long06}.
\begin{theorem}\label{thm:main3}Every noncongruence type II character group
of $\G^0(11)$ satisfies the condition (UBD).
\end{theorem}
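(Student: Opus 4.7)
The plan is to reduce Theorem \ref{thm:main3} to the two preceding theorems. The starting point is to show that for $\G^0(11)$ the notions of type II and type II(A) coincide. Since $11\equiv 3\pmod 4$ and $\left(\tfrac{-3}{11}\right)=-1$, the group $\G^0(11)$ has no elliptic elements, so its abelianization is the \emph{free} abelian group of rank $2g+c-1=3$ (with $g=1$ and $c=2$), and the homomorphism $\pi$ onto $H_1(X^0(11),\ZZ)=\ZZ^{2}$ has kernel (in the abelianization) generated by a single cusp stabilizer class. A character group is type II precisely when its defining surjection $\phi$ kills cusp stabilizers; in this torsion-free setting, this is equivalent to $\phi$ factoring through $\pi$, i.e., to being type II(A).

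Next I would invoke Theorem \ref{thm:main2}. The modular curve $X^0(11)$ is the conductor-$11$ elliptic curve, whose $j$-invariant is not a CM value, so the theorem produces an integer $M:=M(\G^0(11))$ such that every index-$n$ noncongruence type II(A) character group of $\G^0(11)$ with $\gcd(n,M)=1$ satisfies (UBD). Combined with the reduction in the previous paragraph, this already handles every noncongruence type II character group of $\G^0(11)$ whose index is coprime to $M$.

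It remains to handle the type II character groups whose index shares a factor with $M$. Here I would bring in Theorem \ref{thm:main}: since $11$ is squarefree and $X_0(11)$ has genus $1$, the theorem applies to $\G_0(11)$ (which is conjugate to $\G^0(11)$ by the Atkin--Lehner involution $w_{11}$) and gives (UBD) for every noncongruence type I(A) character group of $\G_0(11)$. Given a noncongruence type II character group $\G$ of $\G^0(11)$ of bad index, I would split its abelian quotient as $G=G'\oplus G''$ with $|G'|$ coprime to $M$ and $|G''|$ supported on the primes dividing $M$; the $G'$-layer is controlled by Theorem \ref{thm:main2}, and for the $G''$-layer I would use the Manin--Drinfeld theorem (the cuspidal divisor class group of $X_0(11)$ is cyclic of order $5$, generated by $(0)-(\infty)$) to produce a modular unit on $\G_0(11)$ whose appropriate root defines a type I(A) character group of $\G_0(11)$ matching the $G''$-cover via $w_{11}$. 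Theorem \ref{thm:main} then yields (UBD) on that type I(A) side, and combining the denominator information from the two layers forces the Fourier coefficients of any form on $\G$ that is not a form for $\G^{c}$ to have unbounded denominators.

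The hardest step will be this last matching: translating a type II cover of $X^0(11)$ at a bad index into a type I(A) cover of $X_0(11)$ while faithfully preserving denominators of Fourier coefficients, and then showing that no cancellation occurs when one recombines the $G'$ and $G''$ contributions. This requires an explicit description of the cuspidal modular units on $\G_0(11)$, careful bookkeeping of how $q$-expansion denominators behave under the Atkin--Lehner conjugation and the intermediate covers, and it is precisely where the specific arithmetic of $X_0(11)$ (the Manin--Drinfeld order $5$ and the absence of CM) plays a decisive role.
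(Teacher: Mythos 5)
Your first two steps coincide with the paper's proof: since $\G^0(11)$ has no elliptic elements, type II and type II(A) character groups agree, and Theorem \ref{thm:main2} (the curve $X_{\G^0(11)}$ indeed has no CM) disposes of every index coprime to $M(\G^0(11))$. But the substance of Theorem \ref{thm:main3} beyond Theorem \ref{thm:main2} is exactly the bad-index case, and there your argument has a genuine gap. First, you never determine $M(\G^0(11))$; the paper pins it down as $5$ by combining Cojocaru's effective surjectivity result \cite{Cojocaru05} (surjectivity of $\varphi_p$ for $p>37$ for this curve) with a finite computation on the division polynomials $\psi_p$ for $p\le 37$, $p\neq 5$. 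Without that, the set of ``bad'' indices is not even identified, and the reduction cannot be completed.

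Second, your plan for the $G''$-layer cannot work as stated: Theorem \ref{thm:main} applies only to type I(A) character groups, i.e.\ covers ramified only at the cusps with $\pi(\ker\phi)=H_1(X_{\G^0},\ZZ)$, whereas the covers you must control at the bad prime are type II, i.e.\ unramified at the cusps; by definition these two classes are disjoint, so no conjugation (note also that $w_{11}$ normalizes $\G_0(11)$; it is $\begin{pmatrix}0&-1\\1&0\end{pmatrix}$ that carries $\G_0(11)$ to $\G^0(11)$) and no appeal to the order-$5$ cuspidal class can convert one into the other. Indeed, the Manin--Drinfeld class of order $5$ gives a modular unit $h$ with $\mathrm{div}(h)=5\bigl((0)-(\infty)\bigr)$, and $\sqrt[5]{h}$ generates precisely the unramified (hence type II) degree-$5$ extension --- this is the case excluded by the condition $p\nmid N_i$ in part (3) of Lemma \ref{lem:I(A)chargp} --- so Theorem \ref{thm:main} yields nothing about it. The paper handles the $5$-part by citing Theorem 36 of \cite{Kurth-Long06}, which treats those $5$-power-index type II character groups of $\G^0(11)$ directly. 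Finally, even granting control of both layers, combining the $G'$ and $G''$ contributions is not a formality: it requires re-running the basis decomposition and differential-operator induction from the proofs of Theorems \ref{thm:main} and \ref{thm:main2}, which you acknowledge (``no cancellation occurs'') but do not supply.
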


Note that a modular form for a character group is automatically a
generalized modular form (GMF)
 (cf. \cite{knopp-mason99-JNT} and also the definition in
\cite{kohnen-mason08}). Kohnen and Mason pointed out to the second
author that many GMF's have unbounded denominators. They obtained
several results in \cite{kohnen-mason08} regarding the coefficients
of GMF's with empty or cuspidal divisor.

 In the
appendix, we show that if the expansion of a modular form at one
cusp has algebraic coefficients, then so does its expansion at any
other cusp. It is a fact well-known to the experts and is used in
the proofs, but since we could not find a proof in the literature we
provide one here for the sake of completeness.

For convenience, we say a Laurent power series satisfies the
condition (\textbf{FS-AB}) if its coefficients are algebraic and
have bounded denominators.

\section{
Type I(A) character subgroups of $\G_0(M)$ with $M$
square-free}\label{sec:typeI}
\begin{lemma}\label{lem:3}
  If $\G$ is a type I(A) (resp. II(A)) character group of $\G^0$, then $\G=\bigcap_{i=1}^s \G_i$ such that each $\G_i$ is a type I(A) (resp. II(A)) character
  group of $\G^0$ with $\G^0/\G_i\cong \ZZ/p_i^{e_i} \ZZ$ for some primes $p_i$ and positive integers
  $e_i$.
\end{lemma}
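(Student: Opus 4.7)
The plan is to use the structure theorem for finite abelian groups to decompose the target $G$ into cyclic groups of prime power order, then pull the decomposition back through $\phi$, and finally verify that each factor inherits the relevant type condition.

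More precisely, write $\phi: \G^0 \twoheadrightarrow G$ with $G = G/\G$ finite abelian. By the invariant factor decomposition, $G \cong \bigoplus_{i=1}^s \ZZ/p_i^{e_i}\ZZ$ for some primes $p_i$ and positive integers $e_i$. Let $\psi_i : G \twoheadrightarrow \ZZ/p_i^{e_i}\ZZ$ denote the canonical projection, set $\phi_i := \psi_i \circ \phi$, and define $\G_i := \ker \phi_i$. Each $\G_i$ is a normal subgroup of $\G^0$ with cyclic quotient of prime power order, so is a character group of $\G^0$ in the sense of the paper. Since an element $\g \in \G^0$ satisfies $\phi(\g)=0$ if and only if $\psi_i(\phi(\g))=0$ for every $i$, one immediately has $\G = \bigcap_{i=1}^s \G_i$.

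It remains to verify that each $\G_i$ inherits the type I(A) (resp. II(A)) property from $\G$. For the II(A) case this is the easier part: if $\phi$ factors through $\ker \pi$, i.e.\ $\phi(\ker \pi) = 0$, then $\phi_i(\ker \pi) = \psi_i(\phi(\ker \pi)) = 0$ as well, so each $\phi_i$ factors through $\ker \pi$ and $\G_i$ is type II(A). For the I(A) case, two conditions must be checked. First, the covering $X_{\G_i} \to X_{\G^0}$ is unramified outside the cusps, which is equivalent to $\phi_i$ vanishing on every elliptic element of $\G^0$; this follows from the same property of $\phi$ together with $\phi_i = \psi_i \circ \phi$. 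Second, one needs $\pi(\G_i) = H_1(X_{\G^0},\ZZ)$, which follows from the inclusion $\G \subseteq \G_i$ and the hypothesis $\pi(\G) = H_1(X_{\G^0},\ZZ)$, because $\pi(\G_i) \supseteq \pi(\G)$.

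I do not expect a serious obstacle. The only point requiring genuine care is the translation of the geometric conditions defining types I(A) and II(A) into group-theoretic conditions on $\phi$ (vanishing on elliptic elements, factoring through $\ker \pi$, surjectivity onto $H_1$), both of which are visibly preserved under composition with a quotient homomorphism and under passing from $\ker \phi$ to the larger subgroup $\ker \phi_i$. Once those translations are in hand, the proof is essentially the structure theorem plus a line of verification per condition.
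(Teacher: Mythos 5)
Your proof matches the paper's argument exactly: decompose the abelian quotient into cyclic groups of prime-power order, define $\G_i$ as the kernel of $\phi$ composed with each projection, observe $\G = \bigcap \G_i$, and check the type condition is inherited. The paper dismisses the last step with ``by the definitions''; your explicit verification (monotonicity of $\pi$-images for the I(A) surjectivity requirement, and preservation of factoring through $\ker\pi$ for II(A)) simply fills in what that phrase elides, though note the decomposition you write is the primary (elementary divisor) decomposition, not the invariant factor decomposition as you named it.
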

\begin{proof}By the definition of character groups, there is a surjective homomorphism $\phi: \G^0
\rightarrow G$ where $G$ is a finite abelian group such that
$\G=\ker \phi$.
  By the Fundamental Theorem of Finite Abelian Groups, ${\displaystyle G\cong
  \bigoplus_{i=1}^s \ZZ/p_i^{e_i} \ZZ}$.
  Let $\phi_i: \G^0 \rightarrow \ZZ/p_i^{e_i} \ZZ$ be the natural
  projections
  of $\phi$ and $\G_i=\ker \phi_i$. Therefore, $\G=\bigcap _{i=1}^s \G_i$. If
  $\G$ is a type I(A) (resp. II(A)) character group of $\G^0$ then by the
  definitions  each $\G_i$ as a character group of $\G^0$  is also of  type I(A) (resp. II(A)).
\end{proof}

Let $\frak M_{\G}$ (resp. $\frak M_{\G^0}$) denote the
field of meromorphic modular functions for $\G$ (resp. $\G^0$). Let
 $ c_1,\cdots, c_{t-1}, c_t=\infty$ be the list
of  cusps of $X_{\G^0}$ with $\g_i$ the generator of the stabilizer
of $c_i$. By the Manin-Drinfeld theorem, $(c_i)-(\infty)$ is an
order $N_i$ torsion point of the Jacobian $J_0(M)$ of $X_{\G_0(M)}$.
Therefore, there is a modular function $h_i\in \frak M_{\G_0(M)}$
such that $\text{div}(h_i)=N_i((c_i)-(\infty))$ for every
$i=1,\cdots,t-1$. For a fixed prime number $p$, the extension $\frak
M_{\G^0}(\sqrt[p]{h_i})$ over $\frak M_{\G^0}$ corresponds to a type
I(A) character group of $\G^0$ if, and only if, $N_i$ is relatively
prime to $p$.
\begin{lemma}\label{lem:I(A)chargp}
\begin{enumerate}
  \item If $\G$ is a type I(A) character group of $\G^0$, then any
  intermediate group sitting between $\G^0$ and $\G$ is also a type
  I(A) character group of $\G^0$.
  \item The intersection of two type I(A) character groups of $\G^0$
  is also a type I(A) character group of $\G^0$.
  \item  For any integer $e\ge 1$, the extension $\frak
  M_{\G^0}(\sqrt[p^e]{h_i})$ over $\frak
  M_{\G^0}$
corresponds to a type I(A) character group of $\G^0$ if, and only
if, $N_i$ is relatively prime to $p$.
\item There are $p^{e(t-2)}+p^{e(t-3)}+\cdots +1$ non-isomorphic
index-$p^e$ type I(A) character groups of $\G^0$ whose field
extensions over $\frak M_{\G^0}$ can be generated by modular units.
\end{enumerate}
\end{lemma}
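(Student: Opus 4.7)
The plan is to translate each part into Kummer theory on $\mathfrak{M}_{\G^0}$. The key inputs are the Manin--Drinfeld fact that $(c_i)-(\infty)$ has finite order $N_i$ in $J_0(M)$ (controlling which powers of $h_i$ are principal), together with the characterization stated in the introduction: a cyclic type I(A) extension of $\mathfrak{M}_{\G^0}$ is exactly one generated by $\sqrt[n]{f}$ for some modular unit $f$.

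Parts (1) and (2) reduce to elementary group theory plus inheritance of the type I(A) conditions. For (1), given $\G\subseteq \G' \subseteq \G^0$ with $\G$ type I(A), the quotient $\G'/\G$ is a subgroup of the abelian group $\G^0/\G$, so $\G'$ is normal in $\G^0$ with abelian quotient. The elliptic stabilizers of $\G^0$ already lie in $\G\subseteq \G'$, which gives unramification outside cusps, and $\pi(\G')\supseteq \pi(\G)=H_1(X_{\G^0},\ZZ)$ forces equality. For (2), $\G_1\cap\G_2$ is normal in $\G^0$ with abelian quotient embedding into $\G^0/\G_1\times\G^0/\G_2$, and the elliptic stabilizers are again inherited. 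The remaining condition $\pi(\G_1\cap\G_2)=H_1$ I would deduce by realizing $\mathfrak{M}_{\G_1\cap\G_2}$ as the compositum $\mathfrak{M}_{\G_1}\cdot\mathfrak{M}_{\G_2}$, which is generated over $\mathfrak{M}_{\G^0}$ by roots of modular units; a Kummer-monodromy calculation at each cusp then shows the parabolic generators surject onto the Galois group, which is precisely the type (A) condition.

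For (3), the order $N_i$ of $(c_i)-(\infty)$ in $J_0(M)$ implies that $m((c_i)-(\infty))$ is principal exactly when $N_i\mid m$. Hence $h_i$ is never a $p$-th power in $\mathfrak{M}_{\G^0}$, so the Kummer extension $\mathfrak{M}_{\G^0}(\sqrt[p^e]{h_i})/\mathfrak{M}_{\G^0}$ has degree $p^e$. By Kummer theory the only possible ramification is at $c_i$ and $\infty$, with ramification index $p^e/\gcd(p^e,N_i)$. Computing the local Kummer monodromy, the associated character $\phi\colon\G^0\to\ZZ/p^e\ZZ$ satisfies $\phi(\g_{c_i})\equiv N_i\pmod{p^e}$, $\phi(\g_{c_j})\equiv 0$ for $j\neq i,\infty$, and $\phi(\g_\infty)\equiv -N_i$. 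Therefore $\phi(\ker\pi)$ is the cyclic subgroup $\langle N_i\rangle\subseteq\ZZ/p^e\ZZ$, which equals $\ZZ/p^e\ZZ$ precisely when $\gcd(N_i,p)=1$. This gives (3).

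For (4), by (3) and the introductory characterization, cyclic index-$p^e$ type I(A) extensions of $\mathfrak{M}_{\G^0}$ generated by modular units correspond bijectively to classes $[u]\in U/p^eU$ satisfying the appropriate non-vanishing $\gcd$ condition on the coordinates, where $U$ is the modular unit group (free of rank $t-1$ by Manin--Drinfeld). Two such classes give isomorphic extensions if and only if they differ by multiplication by a unit in $(\ZZ/p^e\ZZ)^\times$ acting as an automorphism of the target $\ZZ/p^e\ZZ$. Counting admissible orbits over the free $\ZZ/p^e\ZZ$-module of rank $t-1$ yields the formula $p^{e(t-2)}+p^{e(t-3)}+\cdots+1$, with Lemma~\ref{lem:3} ensuring that no genuinely non-cyclic index-$p^e$ type I(A) character group is missed in the reduction. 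The main technical obstacle is precisely this orbit count and the verification that distinct orbits produce distinct subgroups of $\G^0$; I expect the deeper structural steps ((1)--(3)) to be comparatively short once Kummer theory and Manin--Drinfeld are invoked.
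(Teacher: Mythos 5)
Your approach is the same Kummer-theory-plus-Manin--Drinfeld strategy the paper uses, but you give far more detail where the paper is terse (it disposes of (1) and (2) with ``straightforward by definition'' and derives (3) from (1) plus the $e=1$ case asserted just before the lemma). Your explicit computation in (3) of the character on parabolic generators, $\phi(\g_{c_i})\equiv N_i$, $\phi(\g_\infty)\equiv -N_i$, $\phi(\g_{c_j})\equiv 0$ for $j\neq i,\infty$, is in fact the complete argument and handles both directions, whereas citing part (1) only gives the ``only if'' direction; so your treatment of (3) is a genuine improvement in completeness. Your proof of (1) is also fine: $\pi(\G')\supseteq\pi(\G)=H_1$ forces equality, and the elliptic stabilizers are inherited.

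Two places deserve a warning, though, and both are also under-justified in the paper. For (2), your phrase ``a Kummer-monodromy calculation at each cusp then shows the parabolic generators surject onto the Galois group'' is precisely the nontrivial point, not a routine verification. Abstractly, if $\phi_1,\phi_2:\G^0\to G_1,G_2$ both satisfy $\phi_i(\ker\pi)=G_i$, it does \emph{not} follow that $(\phi_1\times\phi_2)(\ker\pi)$ equals the image of $\phi_1\times\phi_2$: the image on $\G^0$ can be strictly larger than the image on $\ker\pi$. Whatever makes (2) true must use the specific shape of characters coming from roots of modular units, and you should actually carry out that monodromy computation rather than wave at it. For (4), you state the target formula $p^{e(t-2)}+\cdots+1=\frac{p^{e(t-1)}-1}{p^e-1}$ but do not execute the orbit count; note that this expression does \emph{not} equal the size of $\PP^{t-2}(\ZZ/p^e\ZZ)$ under the standard convention (primitive vectors modulo $(\ZZ/p^e\ZZ)^\times$) when $e\ge 2$ and $t\ge 3$, since the latter is $p^{(e-1)(t-2)}\cdot\frac{p^{t-1}-1}{p-1}$. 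So either the count should be redone carefully or the stated formula should be re-examined; simply invoking ``orbit counting over the free $\ZZ/p^e\ZZ$-module'' is not enough to resolve the discrepancy.
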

\begin{proof}
By the definition of type I(A) character groups, it is
straightforward to verify the first two claims.

 By part (1), we know that for any integer $e\ge 1$,
$\frak M_{\G^0}(\sqrt[p^e]{h_i})$ corresponds to a type I(A)
character group of $\G^0$ if and only if $N_i$ is relatively prime
to $p$. By part (2), $\sqrt[p^e]{h_1^{a_1}\cdots h_{t-1}^{a_{t-1}}}$
with   integer $a_i$'s gives rise to a type I(A) character group if
and only if $p$ is relatively prime to every $N_i$. Since each $h_i$
is a modular unit, so is $\sqrt[p^e]{h_1^{a_1}\cdots
h_{t-1}^{a_{t-1}}}$. Treating $(a_1, \dots, a_{t-1})$ as an element
of $\PP^{t-2}(\ZZ/p^e \ZZ)$, there is one non-isomorphic index-$p^e$
type I(A) character group of $\G^0$, for each element of
$\PP^{t-2}(\ZZ/p^e \ZZ)$, namely $\frac{p^{e(t-1)}-1}{p^e-1}$ such
groups up to isomorphism.

\end{proof}

\begin{lemma}
If $\G$ is a type I(A) character group of $\G^0$ with $\G^0/\G\cong
\ZZ/p^e \ZZ$ for some prime power $p^e$, then $\frak M_{\G}=\frak
M_{\G^0} (\sqrt[p^e]{f})$ for some modular unit $f$ in $\frak
M_{\G^0}$.
\end{lemma}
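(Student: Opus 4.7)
The plan is to apply Kummer theory. Since $\mathfrak{M}_{\G^0}$ contains $\CC$, it contains all $p^e$-th roots of unity, and the cyclic Galois extension $\mathfrak{M}_{\G}/\mathfrak{M}_{\G^0}$ of degree $p^e$ therefore takes the form $\mathfrak{M}_{\G} = \mathfrak{M}_{\G^0}(\sqrt[p^e]{h})$ for some $h \in \mathfrak{M}_{\G^0}^{*}$. The task is then to replace $h$ by a modular unit $f$ generating the same extension.

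First I would extract divisor data from the type I(A) hypothesis. By the local theory of Kummer extensions, $\mathfrak{M}_{\G^0}(\sqrt[p^e]{h})$ is unramified at a place $P$ of $X_{\G^0}$ if and only if $p^e \mid v_P(h)$. Since the cover $X_{\G} \to X_{\G^0}$ is unramified outside the cusps, we may write
\[
  \text{div}(h) \;=\; \sum_{c\text{ cusp}} a_c\,(c) \;+\; p^e D,
\]
with $D$ supported on non-cuspidal points. Moreover the condition $\pi(\G) = H_1(X_{\G^0},\ZZ)$ built into type I(A) forces the tuple $(a_c \bmod p^e)_c$ to generate $\ZZ/p^e\ZZ$, so at least one $a_c$ is coprime to $p$.

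Two Kummer generators $h$ and $h' = h^a g^{p^e}$ with $\gcd(a,p)=1$ and $g \in \mathfrak{M}_{\G^0}^{*}$ cut out the same subfield of $\mathfrak{M}_{\G}$, so it suffices to choose $(a,g)$ such that $h^a g^{p^e}$ has divisor supported entirely on cusps. Rewriting divisors, this reduces to finding $g$ whose non-cuspidal divisor equals $-aD$, which in turn amounts to showing that the class $[D] \in \text{Pic}(X_{\G^0})$ lies in the subgroup generated by cuspidal divisor classes.

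The hard part is precisely this last step. From the divisor equation we know only that $p^e[D] = -\bigl[\sum_c a_c(c)\bigr]$ is cuspidal, and by the Manin-Drinfeld theorem the cuspidal subgroup $J_{\text{cusp}} \subset \text{Pic}^0(X_{\G^0})$ is finite; however $\text{Pic}^0(X_{\G^0})$ generally carries $p^e$-torsion outside $J_{\text{cusp}}$, so $p^e$-th preimages of cuspidal classes need not themselves be cuspidal. The crux of the argument will be to exploit the full type I(A) hypothesis in tandem with the explicit description in Lemma~\ref{lem:I(A)chargp}(4) of modular-unit-generated extensions via $\sqrt[p^e]{h_1^{a_1}\cdots h_{t-1}^{a_{t-1}}}$, so as to show that within its Kummer equivalence class the generator $h$ can always be adjusted to a modular unit $f$, completing the proof.
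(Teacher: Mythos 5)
Your opening moves are the same as the paper's: Kummer theory (equivalently, the Galois correspondence) gives $\mathfrak{M}_{\G}=\mathfrak{M}_{\G^0}(\sqrt[p^e]{h})$, and the question reduces to replacing $h$ by a modular unit. After that your route diverges into a divisor--class analysis, and that is where the argument stalls: you correctly observe that $\mathrm{div}(h)=\sum_c a_c(c)+p^e D$ and that you would be done if $[D]$ (or $[aD]$ for suitable $a$) lay in the cuspidal subgroup, but you also correctly observe that this is exactly what is not automatic, since $\mathrm{Pic}^0(X_{\G^0})$ can have $p^e$-torsion outside the cuspidal subgroup. Your last paragraph then says that ``the crux of the argument will be to exploit the full type I(A) hypothesis in tandem with Lemma~\ref{lem:I(A)chargp}(4) \ldots\ so as to show that within its Kummer equivalence class the generator $h$ can always be adjusted,'' which is a description of what would need to be shown, not a demonstration that it holds. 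As it stands this is a genuine gap: the proposal names the missing implication but does not supply it.

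The paper bypasses the divisor--class question entirely with a counting argument. A surjection $\phi:\G^0\to\ZZ/p^e\ZZ$ with $\ker\phi$ of type I(A) is pinned down by the images $\phi(\g_1),\ldots,\phi(\g_{t-1})$ of the parabolic generators (the type I(A) condition $\pi(\ker\phi)=H_1(X_{\G^0},\ZZ)$ kills the contribution from the hyperbolic generators), subject to $p\nmid N_i$ whenever $\ord\phi(\g_i)$ is divisible by $p$. Counting the admissible tuples gives $\frac{p^{e(t-1)}-1}{p^e-1}$ distinct index-$p^e$ type I(A) character groups with cyclic quotient. Lemma~\ref{lem:I(A)chargp}(4) shows the modular-unit-generated extensions $\sqrt[p^e]{h_1^{a_1}\cdots h_{t-1}^{a_{t-1}}}$ already produce exactly that many. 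Since the latter form a subset of the former and the cardinalities agree, every index-$p^e$ type I(A) cyclic character group is modular-unit-generated. If you want to salvage your divisor-theoretic approach, you would need to turn the condition $\pi(\ker\phi)=H_1(X_{\G^0},\ZZ)$ into a statement that the auxiliary class $[D]$ can be absorbed into the cuspidal subgroup after twisting; the counting argument makes exactly that conclusion available without ever confronting $[D]$ directly.
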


\begin{proof}
  By the Galois correspondence, $\frak M_{\G}$ is a finite Galois
  extension of $\frak M_{\G^0}$ whose Galois group is isomorphic to
  $\G^0/\G$. Hence $\frak M_{\G}=\frak
M_{\G^0} (\sqrt[p^e]{f})$ for some $f\in \frak M_{\G^0}$.

Now we  show that $f$ can be chosen as a modular unit.  If
$\phi:\G^0 \rightarrow \ZZ/p^e\ZZ$ is a group homomorphism such that
$\G=\ker\phi$ is of type I(A), then $\phi$ is completely determined
by the parabolic elements $\g_1,\cdots, \g_{t-1}$ and $N_i$ has to
be relatively prime to the order of $\phi(\g_i)$  for every
$i=1,\cdots, t-1$.  By a counting argument, we know there are
$\frac{p^{e(t-1)}-1}{p^e-1}$ non-isomorphic index-$p^e$ type I(A)
character groups of $\G^0$ with cyclic quotient. By part (4) of the
previous lemma, this is the same number as arise from modular units,
proving the claim.
\end{proof}
Recall that $\eta(z)=q^{1/24}\prod_{n\ge 1} (1-q^n), \ q=e^{2\pi i
z}$ is the classical Dedekind eta function. Below, we call a
function $f$  an eta quotient if $f=\prod_{j=1}^t \eta(a_j z)^{e_j}$
for $a_j\in \mathbb N$ listed in a strictly increasing order and
$e_j\in \ZZ\setminus \{0\}$.

\begin{theorem}[Tagaki \cite{Takagi97}]\label{thm:tagaki}
Up to a scalar multiple, every modular unit for $\G_0(M)$ with the
positive integer $M$ square-free is an eta quotient.
\end{theorem}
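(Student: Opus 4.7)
The plan is to reduce Takagi's theorem to an equality of two sublattices of the degree-zero divisor group supported on the cusps of $X_0(M)$. For $M$ square-free the cusps of $\G_0(M)$ are in bijection with the divisors $d\mid M$, giving $t=2^{\omega(M)}$ cusps. By Manin-Drinfeld the group $U(M)$ of modular units on $\G_0(M)$ modulo $\CC^{\ast}$ injects under the divisor map into the free abelian group $\text{Div}^0(\text{cusps})\cong \ZZ^{t-1}$, and its image is exactly the sublattice of degree-zero cuspidal divisors whose class vanishes in $J_0(M)$. The goal is to show that the sublattice $E(M)$ generated by eta quotients coincides with $U(M)$.

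Next I would compute the divisor of an eta quotient $f=\prod_{d\mid M}\eta(dz)^{e_d}$ on $\G_0(M)$ using Ligozat's formula. For $M$ square-free the formula simplifies substantially, since $\gcd(c^2,M)=c$ for every $c\mid M$: the order of $f$ at the cusp labelled by $c\mid M$ becomes
\[
\ord_c(f)=\frac{M}{24\,c}\sum_{d\mid M}\frac{\gcd(c,d)^2\, e_d}{d}.
\]
Imposing the Newman--Ligozat conditions that $f$ be a modular function on $\G_0(M)$ with trivial multiplier system — namely $\sum_d e_d=0$ (weight zero), $\sum_d d\,e_d\equiv 0\pmod{24}$, $\sum_d (M/d)\,e_d\equiv 0\pmod{24}$, and $\prod_d d^{e_d}$ a rational square — carves out the lattice $E(M)\subseteq U(M)$.

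The core step is then to verify $E(M)=U(M)$. The cleanest route is a determinant computation: choose $t-1$ natural independent eta quotients (for instance, indexed by pairs $(d,M/d)$ running over a spanning set of divisors) and show that the absolute value of the determinant of their matrix of cuspidal orders equals the order of the cuspidal subgroup of $J_0(M)$, a quantity with explicit formulas due to Ogg and Ligozat in the square-free case. Equivalently, one can realize each Manin--Drinfeld torsion relation $N_i((c_i)-(\infty))=\text{div}(h_i)$ directly by an explicit eta quotient, then argue by counting that no other modular-unit classes remain.

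The main obstacle will be this saturation step: establishing rank equality is comparatively routine once one exhibits $2^{\omega(M)}-1$ linearly independent eta quotients, but ruling out a proper finite-index sublattice requires the square-free hypothesis in an essential way. The simplification $\gcd(c^2,M)=c$, together with enough flexibility in the $24$-divisibility conditions that one need never pass to fractional exponents, is precisely what fails for non-square-free $M$; for such $M$ the conclusion of the theorem is itself false, so any correct proof must exploit square-freeness at this stage.
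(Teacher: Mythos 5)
The paper does not prove this statement: Theorem~\ref{thm:tagaki} is quoted from Takagi's 1997 paper and invoked as a black box, so there is no in-paper argument to compare your attempt against. Evaluating your proposal on its own terms, the setup is sound --- the divisor-map embedding of $U(M)$ modulo $\CC^{\times}$ into $\operatorname{Div}^0(\text{cusps})$, the Ligozat order formula with the square-free simplification $\gcd(c^2,M)=c$, and the Newman--Ligozat conditions cutting out the eta-quotient lattice $E(M)\subseteq U(M)$ are all correctly stated.

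The gap is precisely the ``saturation step'' you flag, and both routes you propose for it are circular. The determinant comparison needs $|C(M)|$, the cuspidal class number, as an \emph{independent} input; but for general square-free $M$ that formula is not due to Ogg (prime level) nor Ligozat (prime-power level) --- it is Takagi's own main theorem, and his proof \emph{derives} the class number from the determination of the unit group, not the reverse. Your second route needs the exact torsion orders $N_i$ of $(c_i)-(\infty)$ in $J_0(M)$, which again feed back through the cuspidal class group. Your closing diagnostic is also off: the failure for non-square-free $M$ is not about having to pass to fractional exponents, but that $E(M)$ is genuinely a proper sublattice of $U(M)$ there (generalized or Siegel-type units appear that are not products of $\eta(dz)$). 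The actual argument runs through a different mechanism entirely: one classifies the full modular-unit group on $\Gamma(M)$ via Kubert--Lang Siegel units, descends to $\Gamma_0(M)$ by taking Galois invariants for the covering $X(M)\to X_0(M)$, and uses square-freeness at that descent step to show the invariant lattice is spanned by exactly the Siegel-unit products that assemble into $\eta(dz)$, $d\mid M$. That Galois-descent computation is the substance of the theorem and is absent from your sketch.
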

The following lemma is a special case of Lemma 11 in
\cite{Kurth-Long06}. For any $n\ge 1$ and with a principal branch
fixed, we formally write
\begin{equation}\label{eq:nthroot}
 (1+x)^{1/n}= \sqrt[n]{1+x}=\sum_{m\ge 0}\frac{\left (\frac{1}{n} \right )_m}{m!}x^m,
\end{equation} where
$(\frac{1}{n})_m=\frac{1}{n}(\frac{1}{n}-1)\cdots
(\frac{1}{n}-m+1)$.
\begin{lemma}\label{lem:ubd-h}
 Let $n$ be any natural
 number and $f=1+\sum_{m\ge 1}a(m)w^m, a(m)\in \ZZ$ for all $m$.  In terms of (\ref{eq:nthroot}),  we expand
$\sqrt[n]{f}=\sum_{m\ge 0} b(m)w^m,$ with  $b(m)\in \ZZ[1/n]$
formally. Let $p$ be a prime factor of $n$. If there exists one
 $b(m)$ which is not $p$-integral, then
$$\limsup_{m\rightarrow \infty} \left (-\ord_p b(m) \right ) \rightarrow \infty.$$
In other words, $\{b(m)\}$ has unbounded denominators.
\end{lemma}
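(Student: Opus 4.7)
The plan is to prove the contrapositive: if $\sup_m(-\ord_p b(m)) < \infty$, then every $b(m)$ is $p$-integral, so a single non-$p$-integral $b(m)$ already forces $\limsup(-\ord_p b(m)) = \infty$.

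Write $g := \sqrt[n]{f} = \sum_{m\ge 0} b(m) w^m$ and work inside $\Qp[[w]]$: this is legitimate since $b(m)\in\ZZ[1/n]\subset\Qp$ and $f\in\ZZ[[w]]\subset\Zp[[w]]$, while the identity $g^n=f$ makes sense formally. Suppose that $N := \sup_m(-\ord_p b(m))$ is finite. Because this is a supremum of integers bounded above, it is attained at some index $m_0$; and if any $b(m)$ is not $p$-integral, then $N\ge 1$.

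The key step is to consider $u := p^N g \in \Zp[[w]]$ and reduce mod $p$. The coefficient of $w^{m_0}$ in $u$ is $p^N b(m_0)$, which has $\ord_p=0$, so the reduction $\bar u\in\F_p[[w]]$ is nonzero. On the other hand, $u^n = p^{Nn} f$ lies in $p^{Nn}\Zp[[w]]\subset p\,\Zp[[w]]$ as soon as $N\ge 1$; hence $\bar u^n = 0$ in $\F_p[[w]]$. Since $\F_p[[w]]$ is an integral domain, this forces $\bar u = 0$, contradicting the preceding sentence. Therefore $N=0$ and every $b(m)$ is $p$-integral.

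The proof is essentially obstacle-free once one chooses to pass to $\Zp[[w]]$; the key observation is that $\F_p[[w]]$ is a domain, which rules out $n$-th roots picking up bounded-but-nonzero $p$-adic denominators. The hypothesis $p\mid n$ is not used in any inequality above; its role is merely to make the statement nonvacuous, because when $p\nmid n$ the inclusion $\ZZ[1/n]\subset\Zp$ already guarantees $p$-integrality of every $b(m)$.
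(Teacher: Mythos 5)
Your argument is correct. The paper itself does not reproduce a proof of this lemma --- it is cited as a special case of Lemma~11 of Kurth--Long (2008) --- so no line-by-line comparison with the authors' argument is available from the text. The idea you use is a clean one: assuming $N := \sup_m(-\ord_p b(m))$ is finite, the series $u := p^N g$ lies in $\Zp[[w]]$ and has a $p$-adic unit coefficient at any index realizing the supremum; yet if $N\geq 1$ then $u^n = p^{Nn} f \equiv 0 \pmod{p}$, and since $\F_p[[w]]$ is an integral domain this forces $\overline{u}=0$, a contradiction. Hence $N=0$ and every $b(m)$ is $p$-integral. This is tighter and more conceptual than tracking $p$-adic valuations through the coefficient recursion that $g^n=f$ produces, which is the more pedestrian route to such statements. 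Your closing remark that $p\mid n$ is logically unnecessary --- it only keeps the statement from being vacuous, since $p\nmid n$ gives $\ZZ[1/n]\subset\Zp$ outright --- is a nice observation.

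One small gap to close: your contrapositive delivers $\sup_m(-\ord_p b(m)) = \infty$, while the lemma's conclusion is phrased as $\limsup_{m\to\infty}(-\ord_p b(m)) = \infty$. These are equivalent in this setting and you should say so explicitly: each $-\ord_p b(m)$ is a finite integer when $b(m)\neq 0$ (and $-\infty$ when $b(m)=0$, with $b(0)=1$ guaranteeing at least one finite value), so a tail bound $-\ord_p b(m)\leq L$ for all $m>M_0$ would yield $\sup_m(-\ord_p b(m)) = \max\bigl(L,\ \max_{m\leq M_0}(-\ord_p b(m))\bigr)<\infty$. It is a one-line bridge, but it completes the deduction from your contrapositive to the stated conclusion.
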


\begin{lemma}\label{lem:etaubd}
  Let $f=\prod_{j=1}^t \eta(a_j z)^{e_j}$ be an eta quotient. For any prime power $p^e$ not dividing the
greatest common divisor of the $e_j$'s, the Fourier coefficients of
$\sqrt[p^e]{f}$ have unbounded denominators.
\end{lemma}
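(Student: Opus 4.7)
The plan is to reduce the lemma to Lemma \ref{lem:ubd-h} by exhibiting a single non-$p$-integral Fourier coefficient of $\sqrt[p^e]{f}$. Writing $\eta^*(az):=\prod_{n\ge1}(1-q^{an})$ and $g:=\prod_j\eta^*(a_jz)^{e_j}\in 1+q\ZZ[[q]]$, we have $f=q^{\frac{1}{24}\sum_j a_je_j}\cdot g$. Since multiplying by a fixed rational power of $q$ only shifts exponents and preserves the bounded/unbounded denominator dichotomy of the Fourier coefficients, it suffices to produce one non-$p$-integral coefficient of $\sqrt[p^e]{g}$; Lemma \ref{lem:ubd-h} will then upgrade this to unbounded denominators.

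Since $p^e\nmid\gcd_j(e_j)$, let $j_0$ be the smallest index with $\ord_p(e_{j_0})<e$. For every $j<j_0$ we have $p^e\mid e_j$, so $e_j/p^e\in\ZZ$, and
\[
A_1:=\prod_{j<j_0}\eta^*(a_jz)^{e_j/p^e}\ \in\ 1+q\ZZ[[q]].
\]
Setting $B:=\eta^*(a_{j_0}z)^{e_{j_0}}$ and $C:=\prod_{j>j_0}\eta^*(a_jz)^{e_j}$ (with empty products equal to $1$), the identity $g=A_1^{p^e}\cdot B\cdot C$ gives the factorization $\sqrt[p^e]{g}=A_1\cdot(BC)^{1/p^e}$ as formal power series with constant term~$1$.

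The crux is the low-order expansion of $(BC)^{1/p^e}$. Since $a_1<\cdots<a_t$, only the $(1-q^{a_{j_0}})^{e_{j_0}}$ factor of $B$ contributes below degree $a_{j_0}+1$, giving $B\equiv 1-e_{j_0}q^{a_{j_0}}\pmod{q^{a_{j_0}+1}}$, while $C\equiv 1\pmod{q^{a_{j_0}+1}}$ because every factor of $C$ starts at $1+O(q^{a_j})$ with $a_j>a_{j_0}$. So $BC\equiv 1-e_{j_0}q^{a_{j_0}}\pmod{q^{a_{j_0}+1}}$, and (\ref{eq:nthroot}) yields
\[
(BC)^{1/p^e}\equiv 1-\frac{e_{j_0}}{p^e}q^{a_{j_0}}\pmod{q^{a_{j_0}+1}}.
\]
All intermediate coefficients of $(BC)^{1/p^e}$ in degrees $1,\dots,a_{j_0}-1$ vanish, so multiplying by $A_1\in 1+q\ZZ[[q]]$ produces the coefficient $N-e_{j_0}/p^e$ at degree $a_{j_0}$ of $\sqrt[p^e]{g}$, where $N\in\ZZ$ is the coefficient of $q^{a_{j_0}}$ in $A_1$. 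Since $\ord_p(e_{j_0})<e$, this coefficient has strictly negative $p$-adic order and is not $p$-integral; Lemma \ref{lem:ubd-h} then concludes.

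The main point to verify is that the integer contribution $N$ from $A_1$ at degree $a_{j_0}$ cannot cancel the term $-e_{j_0}/p^e$; this is automatic from $A_1\in\ZZ[[q]]$ and $\ord_p(e_{j_0})<e$. The boundary cases $j_0=1$ (so $A_1=1$) and $j_0=t$ (so $C=1$) are handled by the empty-product convention, so no separate argument is needed.
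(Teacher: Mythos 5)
Your proof is correct, and it takes a genuinely different route from the paper's. The paper reduces to $\gcd(e_j)=1$ and to the case $e=1$, assumes for contradiction that the coefficients of $\sqrt[p]{f}$ are $p$-integral (hence integral, by Lemma~\ref{lem:ubd-h}), and then invokes the uniqueness of the infinite-product expansion $q^{r}\prod_{n\ge1}(1-q^n)^{c(n)}$ from Proposition~2.1 of Bruinier--Kohnen--Ono: integrality of the Fourier coefficients forces all $c(n)\in\ZZ$, whereas the direct product form of $\sqrt[p]{f}$ produces a non-integral exponent $c'(a_{n_0})$, a contradiction. You instead factor out the largest ``exact $p^e$-th power'' piece $A_1^{p^e}$, expand $(BC)^{1/p^e}$ to first nontrivial order, and read off directly that the coefficient of $q^{a_{j_0}}$ in $\sqrt[p^e]{g}$ equals $N-e_{j_0}/p^e$ with $N\in\ZZ$, which is not $p$-integral by the ultrametric inequality since $\ord_p(e_{j_0})<e$. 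Both proofs then finish by feeding a single non-$p$-integral coefficient into Lemma~\ref{lem:ubd-h}. Your version has two advantages: it avoids the external BKO uniqueness theorem entirely, replacing it with a short self-contained low-order computation, and it treats the $p^e$ case and the $\gcd$ normalization directly rather than via the (unstated but valid) reductions ``$\sqrt[p]{f}=(\sqrt[p^e]{f})^{p^{e-1}}$'' and ``$\gcd=1$.'' The paper's version, by contrast, identifies a whole family of obstructed exponents $c'(a_j)$ at once, which is slightly more structural but relies on more machinery. One minor point worth making explicit in your writeup: the identity $g^{1/p^e}=A_1\cdot(BC)^{1/p^e}$ holds because both sides are formal power series with constant term $1$ whose $p^e$-th power is $g$, and such a series is unique.
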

\begin{proof}
We may assume the greatest common divisor of the exponent $e_j$'s is
1. We now show that the coefficients of $\sqrt[p]{f}$ have unbounded
denominators. If not, by Lemma \ref{lem:ubd-h} the expansion
$\sqrt[p]{\prod_{j=1}^t \eta(a_j z)^{e_j}}=\sum b(n)q^{n/p}$
satisfies $b(n)\in \ZZ$. By Proposition 2.1 of
\cite{Bruinier-Kohnen-Ono04} we can write $\sum b(n)q^{n/p}$
uniquely into the form $q^{r}\prod_{n\ge 1} (1-q^n)^{c(n)}$ for some
rational number $r$ and complex numbers $c(n)$'s which can be
determined by the $b(n)$'s recursively. It is straightforward to
check that if the $b(n)$'s are all integers then so are the
$c(n)$'s. On the other hand it is easy to rewrite
$\sqrt[p]{\prod_{j=1}^t \eta(a_j z)^{e_j}}$ into the infinite
product form $q^{r'}\prod_{n\ge 1} (1-q^n)^{c'(n)}$ directly. If
$n_0$ is the least positive integer such that $p\nmid e_{n_0}$ then
$c'(a_{n_0})$ is not an integer. By the uniqueness of the  $c(n)$'s,
$c(n)=c'(n)$ which leads to a contradiction.
\end{proof}

\begin{cor}
Let $f=\prod_{j=1}^t \eta(a_j z)^{e_j}$ be an eta quotient, $d$ be
the greatest common divisor of the $e_j$'s.  If a prime $p\nmid d$,
then the Fourier coefficients of $\sqrt[p]{f}$ at infinity have
unbounded denominators, and so do the Fourier coefficients at any
cusp of $\G$. The modular function $\sqrt[p]{f}$ is modular for a
congruence subgroup if and only if $p$ divides $d$.
\end{cor}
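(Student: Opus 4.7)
The plan is to handle the three assertions of the corollary in turn, using Lemma \ref{lem:etaubd} as the main engine.

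First, the assertion that the Fourier coefficients of $\sqrt[p]{f}$ at infinity have unbounded denominators is precisely the special case $p^e = p$ (i.e.\ $e=1$) of Lemma \ref{lem:etaubd}, so this part requires no further work.

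Second, to propagate the unbounded-denominator property to every other cusp of $\G$, I would appeal to (a mild strengthening of) the appendix result advertised in the introduction. The appendix shows that algebraicity of Fourier coefficients transfers between cusps; the same argument in fact applies to boundedness of denominators, because any two cusps of $\G$ differ by a matrix in $\SLZ$, and the resulting substitution between the two $q$-expansions can only introduce a controlled, finite set of additional primes into the denominators. Contrapositively, unbounded denominators at infinity force unbounded denominators at every other cusp of $\G$.

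Third, for the congruence characterization, I would treat the two directions separately. If $p\mid d$, writing $e_j = p e'_j$ with $e'_j\in\ZZ$ yields $\sqrt[p]{f} = \prod_{j=1}^{t}\eta(a_jz)^{e'_j}$ up to a choice of branch, which contributes at most a $p$-th root of unity. This is again an eta quotient and hence a modular function for some congruence subgroup of $\SLZ$. Conversely, if $p\nmid d$ but $\sqrt[p]{f}$ were modular for a congruence subgroup, then the classical fact that a modular function for a congruence subgroup with algebraic Fourier coefficients has bounded denominators (coefficients in $\mathcal{O}_K[1/N]$ for some number field $K$ and integer $N$) would directly contradict the first assertion of the corollary.

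The main obstacle is the upgrade of the appendix's algebraicity statement to boundedness of denominators. Fortunately, once one tracks the denominators contributed by the cusp-changing matrices in $\SLZ$ and by the associated change of $q$-variables, this upgrade should go through essentially unchanged; the remainder of the argument then reduces to routine invocations of Lemma \ref{lem:etaubd} and the standard bounded-denominator property for congruence modular functions.
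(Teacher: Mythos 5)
Your handling of the first and third assertions is fine, but the second assertion — the propagation of unbounded denominators from $\infty$ to every other cusp — has a real gap. You propose to upgrade the appendix result from algebraicity to boundedness of denominators, arguing that the cusp-changing ``substitution between the two $q$-expansions can only introduce a controlled, finite set of additional primes.'' But the appendix's passage between cusps is not a $q$-substitution at all: it determines the new Fourier coefficients recursively by solving $g(f,j)=0$, and at each step one divides by quantities like $h_0^{(w)}(a_0)$ and multiplies by previously computed coefficients. Nothing in that recursion a priori keeps the denominators bounded; indeed, if bounded denominators always transferred between cusps by a soft argument, much of the subtlety in the paper's remark after the proof of Theorem~\ref{thm:main} would be vacuous. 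So the appeal to a ``mild strengthening'' of the appendix does not hold up without substantial new work.

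The paper sidesteps this entirely by a short structural observation that you have not used: for $\gamma\in\SLZ$, the slash $f|_\gamma$ of an eta quotient is again an eta quotient (by the eta transformation formulae and the coset decomposition of $\mathrm{M}_2(\ZZ)$), and moreover its exponent vector is a permutation of the original one, so its gcd is still $d$. One then applies Lemma~\ref{lem:etaubd} directly to $f|_\gamma$ at each cusp — no transfer principle is needed. Your third point (the congruence characterization) is correct and routes through Lemma~\ref{lem:A2}; the paper instead notes that bounded denominators at a cusp force $\sqrt[p]{f}|_\gamma$ to be an eta quotient (via Lemma~\ref{lem:etaubd} again), but these are essentially the same argument. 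To repair your proof, replace the appendix-upgrade step with the observation that $f|_\gamma$ is an eta quotient with the same exponent gcd and invoke Lemma~\ref{lem:etaubd} at each cusp.
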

\begin{proof}Let $c$ be a cusp and $\gamma_c\in \SLZ$ such that $\gamma_c
  \infty =c$. The expansion of any modular form $h$ at $c$ is the expansion of $h|_{\gamma_c}$ at infinity. Since $f$ is an eta quotient, $f|_{\gamma_c}$ is also an eta
quotient by the well-known transformation formulae of the eta
function.

If $p\mid d$, $\sqrt[p]{f}$ is an eta quotient and hence congruence,
then so is $\sqrt[p]{f}|_{\gamma}$ for
  any $\g\in \SLZ$. Thus the Fourier expansion of $f$ at any cusp
  satisfies (FS-AB). Conversely, if $\sqrt[p]{f}|_{\gamma}$
  satisfies (FS-AB) for some $\gamma$, then $\sqrt[p]{f}|_{\gamma}$
  is an eta quotient and hence congruence. This implies $\sqrt[p]{f}$
  is also congruence. Therefore it is an eta quotient and $p\mid d$.
\end{proof}

\begin{lemma}\label{lem:A2} If $g(z)$ is a modular function of a congruence group
 with poles only at the cusps and algebraic Fourier
coefficients, then there is a constant $A$ such that $A \cdot g(z)$
has algebraic integer Fourier coefficients.
\end{lemma}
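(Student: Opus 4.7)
The plan is to realize $g$, up to multiplying by an algebraic integer $A$, as an integral element over $\mathcal{O}_K[j]$ in the function field of the congruence modular curve $X_{\G'}$, and then transfer this integrality to the Fourier coefficients via integrality of the $j$-expansion itself.

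Let $\G'$ be the congruence group for which $g$ is modular, and choose $N$ with $\G(N)\subseteq\G'$. Because the finite covering $X_{\G'}\to X(1)$ sends every cusp to the unique cusp $\infty\in X(1)$, and $j$ has its only pole (a simple one) at $\infty$, the coordinate ring $\CC[Y_{\G'}]$ of $Y_{\G'}:=X_{\G'}\setminus\{\mathrm{cusps}\}$ is integral over $\CC[j]$, so $g\in\CC[Y_{\G'}]$ by the pole hypothesis. I then descend to $\Qbar$: the curve $X_{\G'}$ has a canonical $\Qbar$-rational model, and the $q$-expansion principle for its integral model implies that any modular function with algebraic $q$-expansion at $\infty$ is defined over $\Qbar$; hence $g\in\Qbar[Y_{\G'}]$. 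Because $\Qbar[j]$ is integrally closed in $\Qbar(j)$, the minimal polynomial of $g$ has the form
\begin{equation*}
g^m+P_{m-1}(j)\,g^{m-1}+\cdots+P_0(j)=0,\qquad P_i\in K[j],
\end{equation*}
for some number field $K$ containing all Fourier coefficients of $g$.

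I clear denominators by choosing $A\in\mathcal{O}_K\setminus\{0\}$ with $A^iP_{m-i}(j)\in\mathcal{O}_K[j]$ for each $i=1,\dots,m$; the substitution $h:=Ag$ then satisfies a monic integral equation over $\mathcal{O}_K[j]$. Finally I pass to Fourier expansions. For every prime $\mathfrak{p}$ of $\mathcal{O}_K$, the ring $R_\mathfrak{p}:=\mathcal{O}_{K,\mathfrak{p}}[q^{-1/N}][[q^{1/N}]]$ is a localization of the two-dimensional regular local ring $\mathcal{O}_{K,\mathfrak{p}}[[q^{1/N}]]$, hence integrally closed in its fraction field $K((q^{1/N}))$. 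Since $j\in\ZZ[q^{-1}][[q]]\subseteq R_\mathfrak{p}$, the element $h$ is integral over $R_\mathfrak{p}$ and therefore lies in $R_\mathfrak{p}$. Intersecting over all $\mathfrak{p}$ shows that the Fourier coefficients of $h=Ag$ belong to $\mathcal{O}_K$, i.e., are algebraic integers.

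The main obstacle is the descent step: justifying that algebraicity of the Fourier coefficients at one cusp forces $g$ to be defined over $\Qbar$. This is classical for congruence groups via the $q$-expansion principle on a moduli scheme over $\ZZ[\zeta_N,1/N]$, but it is precisely the property that is expected to fail for noncongruence modular forms and underlies the condition (UBD) studied in this paper.
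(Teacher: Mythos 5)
Your approach is genuinely different from the paper's. The paper's proof is short: multiply $g$ by a power of $\Delta = \eta^{24}$ to clear the poles at the cusps, apply Shimura's Theorem 3.52 to conclude that (a constant multiple of) the resulting holomorphic cusp form has algebraically integral coefficients, and then divide back by $\Delta^n$, observing via the geometric series that $1/\Delta$ also has integer coefficients. You instead try to produce a monic integral dependence of $h=Ag$ over $\mathcal{O}_K[j]$ and deduce integrality of the $q$-expansion from a ring-theoretic integral-closure argument.

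The final step of your argument has a genuine gap. You claim that $R_\mathfrak{p}=\mathcal{O}_{K,\mathfrak{p}}((q^{1/N}))$ is integrally closed in ``its fraction field $K((q^{1/N}))$.'' But the fraction field of $R_\mathfrak{p}$ is a \emph{proper} subfield of $K((q^{1/N}))$ (for instance, $\sum_{n\ge 0}\pi^{-n^2}q^{n/N}$ is not a quotient of two elements of $R_\mathfrak{p}$), and $R_\mathfrak{p}$ is \emph{not} integrally closed in the larger field $K((q^{1/N}))$. Concretely, the monic polynomial $x^2-(4j^2+j)\in\ZZ[j][x]\subseteq R_\mathfrak{p}[x]$ has a root in $\QQ((q))$, namely $2j\sqrt{1+1/(4j)}=2q^{-1}+\cdots$, whose coefficients have unbounded $2$-adic denominators; so integrality of $h$ over $\mathcal{O}_K[j]$ alone cannot force $h\in R_\mathfrak{p}$. (Of course $\sqrt{4j^2+j}$ is not the $q$-expansion of a modular function for a subgroup of $\SLZ$ --- the ramification pattern at $j=0$ is incompatible --- so there is no contradiction with the lemma; the point is that your argument never uses any such fact, and hence proves too much.) The conclusion ``$h$ is integral over $R_\mathfrak{p}$, therefore $h\in R_\mathfrak{p}$'' needs a genuinely modular input --- exactly what the paper supplies through Shimura's integrality theorem for cusp forms, or what one could alternatively obtain from the $q$-expansion principle on an integral model of $X_{\G'}$, treating the finitely many bad primes separately. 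As written, your proof does not close this gap.
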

\begin{proof}
Let $\Delta(z) = \eta^{24}(z)$ which  is a cuspform for $\SLZ$ with
series:
\begin{equation*}
  \Delta(z) = q -24 q^2 + 252 q^3 +\dots
\end{equation*}
In particular, the Fourier coefficients are all integers.
Multiplying $g$ by powers of $\Delta$ will kill the poles at the
cusps, hence $\Delta^n g$ is also a cuspform for sufficiently large
$n$, and there is a constant $A$ such that $A \cdot \Delta^n g$ has
algebraic integer Fourier coefficients (as a result of Theorem 3.52
in \cite{shim1}). But $\frac{1}{\Delta}$ has algebraic integer
Fourier coefficients as well, since
\begin{equation*}
  \frac{1}{\Delta} = \frac{1}{q} \cdot \frac{1}{1+(\Delta/q-1)}
   = \frac{1}{q}\left(1 - \left(\frac{\Delta}{q}-1\right) + \left(\frac{\Delta}{q}-1\right)^2 -
   \dots\right)
\end{equation*}
So $A \cdot g = A \Delta^{-n}\Delta^n g$ has algebraic integer
Fourier coefficients.
\end{proof}

We are ready to prove  Theorem \ref{thm:main}.
\begin{proof}[Proof of Theorem \ref{thm:main}]Let $\G$ be a type I(A) noncongruence character group of $\G_0(M)$.
Assume $\G_0(M)/\G\cong \bigoplus_{i=1}^s \ZZ/p_i^{e_i}\ZZ$ and
$\G\cong \bigcap_{i=1}^s \Gamma_ i$ where $\G_i$ are type I(A)
character groups of $\G_0(M)$ with $\G_0(M)/\G_i\cong \langle
\gamma_i \G \rangle \cong \ZZ/p_i^{e_i}\ZZ$ for some $\g_i\in
\G_0(M)$ and certain prime powers $p_i^{e_i}$. We further assume
that each $\frak M_{\G_i}$ is generated over $\frak M_{\G_0(M)}$ by
$g_i=\sqrt[p^{e_i}]{f_i}$ with $f_i$ being a modular unit for
$\G_0(M)$. By Theorem \ref{thm:tagaki}, we can assume that each
$f_i$ is an eta quotient. Consequently, a basis of $\frak M_{\G}$
over $\frak M_{\G_0(M)}$ is $S=\left \{\prod_{i=1}^s g_i^{n_i}
\right \}_{0\le n_i\le
 p^{e_i}-1}.$

Let $h$ be an integral weight $k$  modular form for $\G$ holomorphic
on the upper half plane and satisfying (FS-AB). Up to multiplying
with a suitable newform for $\G_0(M)$  one can assume $k$ is a
multiple of 12. Dividing by $\Delta^{k/12}$ we obtain a modular
function for $\G$ satisfying (FS-AB). From now on we assume that $h$
is of weight 0. The goal is to show such a modular function $h$,
holomorphic on the upper half plane and satisfying (FS-AB), must be
congruence.

Write $h=\sum_{I=(n_1,\cdots, n_s)} a_{I}\prod_{i=1}^s g_i^{n_i}$
with $a_I\in \frak M_{\G_0(M)}$. For convenience, we denote
$\prod_{i=1}^s g_i^{n_i}$ by $g^I$. Note that $g_i|_{\g_j}=g_i$ if
$i\neq j$ and $g_i|_{\g_i}=\mu_{p^e}g_i$ where $\mu_n$ stands for a
primitive $n$th root of unity.  So for every $\gamma \G \in
\G_0(M)/\G$, $g^I|_{ \gamma}=\phi_I(\gamma)g^I$ for some character
$\phi_I: \G_0(M)/\G \rightarrow \CC^{\times}$ of $\G_0(M)/\G$. The
$\phi_I$'s are non-isomorphic and they form the complete set of
non-isomorphic characters of the abelian quotient group
$\G_0(M)/\G$. Hence each $a_{I}g^I$ is a linear combination of
$h|_{\g_1^{n_1}\cdots \g_s^{n_s}}$ with $n_i\in \{0,\cdots,
p^{e_i}-1\}$ and some scalars in a cyclotomic field. Note that each
$h|_{\g_1^{n_1}\cdots \g_s^{n_s}}$ is also holomorphic on the upper
half plane with algebraic coefficients (cf. Appendix), thus so is
each $a_{I}g^I$. Also each $g_i$ is nonzero in the upper half plane,
so each modular function $a_I\in \frak M_{\G_0(M)}$ is also
holomorphic on the upper half plane with algebraic coefficients. By
Lemma \ref{lem:A2}, each $a_I$ satisfies (FS-AB).

We partition the basis $S$ into two sets  $S_c$ and $S_n$. An
element in $S$ belongs to $S_c$ if it is congruence
 and otherwise  it belongs to $S_n$. Note that $(h)_c=\sum_{I\in S_c} a_{I}g^I$ is a congruence modular form  which is holomorphic on the
upper half plane, hence it satisfies (FS-AB).  So $(h)_n=\sum_{I\in
S_n} a_{I}g^I=0$ also satisfies (FS-AB).

If  there are $g^I$ and $g^{I'}$ in $S_n$ such that $g^{I'}/g^I=E$
is an eta
 product, then $a_Ig^I+a_I'g^{I'}=(a_I+a_I'E)g^I$ with $a_I+a_I'E$ being congruence and
satisfying (FS-AB). Hence one can further assume that for every two
elements in
 $S_n$ their quotient is not a congruence modular form.

With the  assumptions above, let $M((h)_n)$ be the number of nonzero
$a_I$'s in the expression of $(h)_n$. We will conclude $M((h)_n)=0$
by using an argument similar to the proof of Lemma 13 in
\cite{Kurth-Long06} to exclude the remaining possibilities.

\emph{Case 1}: $M((h)_n)=1$. In this case $a_Ig^I$ satisfies (FS-AB)
for some nonzero $a_I$ satisfying (FS-AB). Since
$(g^I)^{|\G_0(M)/\G|}$ is an eta quotient, it satisfies (FS-AB). By
Lemma 39 of \cite{Kurth-Long06}, $(g^I)^{1+|\G_0(M)/\G|}$ satisfies
(FS-AB). Note that the reciprocal of the eta quotient
$(g^I)^{|\G_0(M)/\G|}$ satisfies (FS-AB). It follows $g^I$ also
satisfies (FS-AB). By Lemma \ref{lem:etaubd}, $g^I$ is congruence.
This contradicts our assumption on $(h)_n$.

\emph{Case 2}: $M((h)_n)>1$. Let $\mathcal D$ be the differential
operator defined in the proof of Lemma 13 in \cite{Kurth-Long06}. If
$h$ is a formal power series whose coefficients have bounded
denominators, then so is $\mathcal D (h)$. Following the argument of
the proof of Lemma 13 in \cite{Kurth-Long06}, there exists a nonzero
modular function $b_I$ for $\G_0(M)$ holomorphic on the upper half
plane satisfying (FS-AB) such that $\widetilde{h}=(b_I-a_I\mathcal
D) ((h)_n)\neq 0$
  and  $(\widetilde{h})_n= \widetilde{h}$.
Moreover, $M((h)_n)>M(\widetilde{h})$. By induction, this case
reduces to back to case  1.

So $h=(h)_c$ is congruence.
\end{proof}

In this proof, if we  replace $h$ by $h|_{\gamma}$ for any $\g\in
\SLZ$ then each $h|_{\gamma}$ is also a combination of $n$th roots
of eta quotients whose coefficients are congruence modular forms
holomorphic on the upper half plane. Consequently, one can
strengthen the (UBD) condition in this case to: for every genuine
noncongruence modular form, holomorphic on the upper-half plane with
algebraic coefficients, its Fourier expansion at \emph{every cusp}
has \emph{unbounded denominators}.

\section{Type II(A) character groups of genus 1 congruence subgroups}
In this section, we follow closely \cite{Kurth-Long06} and the
approach in the previous section.  Let $\G^0$ be a genus 1
congruence subgroup whose modular curve $X_{\G^0}$  is defined over
a number field $K$ and has no complex multiplication. By the theory
of elliptic functions, there exist two modular functions $x$ and $y$
for $\G^0$ with poles of order 2 and 3 respectively at infinity  and
holomorphic everywhere else. The modular functions $x$ and $y$
satisfy $y^2=x^3+Ax+B$ for some  $A,B\in K$. Moreover, the Fourier
coefficients of $x=w^{-2}+a_{-1}w^{-1}+\cdots $ and
$y=w^{-3}+b_{-2}w^{-2}+\cdots, w=e^{2\pi i/\mu}$ are in $K$, where
$\mu$ is the cusp width of $\G^0$ at infinity. By Lemma
\ref{lem:A2}, $x$ has bounded denominators and there exists a
rational integer $N(\G^0)$ depending on $\G^0$ such that for all
prime ideals $\wp$ of $\mathcal O_K$ not dividing $N(\G^0)$, the
coefficients of $x$ are all $\wp$-integral. Let $R=\Z[A,B]$. By
\cite[Ex. 3.7 pp. 105]{sil1},
  there exists a polynomial
  \begin{equation}\label{psi}
\psi_{p}(x)=px^{(p^2-1)/2}+c_{(p^2-1)/2-1} x^{(p^2-1)/2-1}+\cdots
+c_1x+c_0 \in
  R[x]
  \end{equation} satisfied by the $x$-coordinates of the order-$p$
  points of $X_{\G^0}$. Since  $X_{\G^0}[p]$ over $\F_p$ is isomorphic to either
  $\{ 0\}$ or $\Z/p\Z$ (cf. \cite[Theorem 3.1]{sil1}), $p\nmid c_n$
  for some $n$. It follows that there exists one $p$-torsion point $P_0$, whose $x$-coordinate
  is not algebraically integral over $\wp_0$ for some prime ideal above
  $p$.  By a
 result of Serre \cite{ser76}, the homomorphism
\begin{equation}\label{eq:varphi_p}
\varphi_p: \Gal(\overline{\Q}/\Q) \rightarrow
\text{Aut}(X_{\G^0}[p])\cong GL_2(\F_p)
\end{equation}
  on
  the $p$-torsion points of $X_{\G^0}$ is
  surjective for almost all primes $p$ when $X_{\G^0}$ has no complex multiplication.
  When
  $\varphi_p$ is surjective,
  $\Gal(\overline{\Q}/\Q)$ acts on $X_{\G^0}[p]$
  transitively. Consequently for any   $P\in X_{\G^0}[p]$,
  $x(P)$ is not algebraically integral over some prime $\wp$ above $p$.

\begin{lemma}
  If $\a$ is an algebraic number which is not $\wp$-integral for some prime ideal $\wp\nmid N(\G^0)$, then the Laurent power series  $(x-\alpha)^{-1}$ in $w$ has unbounded denominators.
\end{lemma}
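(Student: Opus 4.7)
The plan is to exploit the $\wp$-integrality of the coefficients of $x$ in order to isolate the contribution of $\alpha$ at each even power of $w$. Extend $\wp$ to a prime of the number field $L = K(\alpha)$, still coprime to $N(\G^0)$, and set $m = -v_\wp(\alpha) > 0$. Since $\wp \nmid N(\G^0)$, every Fourier coefficient of $x$ lies in $\O_{L,\wp}$. Writing
$$x \;=\; w^{-2}\bigl(1 + a_{-1}w + a_0 w^2 + a_1 w^3 + \cdots \bigr),$$
the factor in parentheses is a unit in $\O_{L,\wp}[[w]]$, so $x^{-1} \in w^2 \cdot \O_{L,\wp}[[w]]$ with leading coefficient $1$; by induction on $n$, each power $x^{-(n+1)}$ lies in $w^{2(n+1)}\cdot \O_{L,\wp}[[w]]$ and has leading coefficient $1$.

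The core step is the geometric expansion
$$(x-\alpha)^{-1} \;=\; \frac{1}{x}\cdot\frac{1}{1 - \alpha/x} \;=\; \sum_{n=0}^{\infty} \alpha^n\, x^{-(n+1)},$$
which converges in the $w$-adic topology because the $n$-th summand has $w$-order at least $2(n+1)$. Reading off the coefficient of $w^{2(n+1)}$, only the indices $n' \le n$ contribute, and each contribution is $\alpha^{n'}$ times a $\wp$-integral element of $\O_{L,\wp}$. The top contribution $n' = n$ is exactly $\alpha^n$ (since $x^{-(n+1)}$ has leading coefficient $1$), while every contribution with $n' < n$ has $\wp$-valuation at least $-mn' > -mn$.

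Because $v_\wp(\alpha^n) = -mn$ is strictly smaller than the $\wp$-valuation of every other term in the sum, the ultrametric inequality forces the coefficient of $w^{2(n+1)}$ in $(x-\alpha)^{-1}$ to have $\wp$-valuation exactly $-mn$. Letting $n \to \infty$ shows these valuations tend to $-\infty$, so the Laurent series has unbounded denominators, as required. I do not expect a serious obstacle: the only delicate point is the strict inequality $-mn < -mn'$ for $n' < n$, which is where the hypothesis that $\alpha$ is not $\wp$-integral (equivalently $m > 0$) enters essentially; beyond this, the argument is routine $w$-adic bookkeeping.
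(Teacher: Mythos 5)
Your proof is correct and follows essentially the same strategy as the paper's: expand $(x-\alpha)^{-1}$ as a geometric series and use the ultrametric inequality to isolate a dominant term of $\wp$-valuation $-mn$ in the $w^{2(n+1)}$-coefficient. The paper factors $x-\alpha = w^{-2}\bigl(1 + a_{-1}w + (a_0-\alpha)w^2 + \cdots\bigr)$ and expands $(1-(-\beta))^{-1}$ with $\alpha$ absorbed into the $w^2$-coefficient of $\beta$, whereas you factor $x-\alpha = x(1-\alpha/x)$ and keep $\alpha$ cleanly separated as $\alpha^n$ times the $\wp$-integral series $x^{-(n+1)}$; your version makes the ultrametric dominance slightly more transparent, but the underlying argument is the same.
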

\begin{proof}Assume $x=w^{-2}+a_{-1}w^{-1}+a_0+\cdots$.
It is equivalent to show that
$(1+a_{-1}w+(a_0-\a)w^2+\cdots)^{-1}=1+\beta+\beta^2+\cdots=1+\sum
c(n)w^n$ has unbounded denominators where
$\beta=-(a_{-1}w+(a_0-\a)w^2+\cdots)$. It is straightforward to
verify that if $\ord_{\wp}\a=-r$, then $\ord_{\wp}c(2n)=-nr$. So
$(x-\alpha)^{-1}$ has unbounded denominators $\wp$-adically.
\end{proof}

Given a $p$-torsion point $P$ of $X_{\G^0}$, let $f_P\in \frak
M_{\G^0}$ be a modular function whose divisor satisfies that
$\text{div}{f_P}=p(\infty)-p(P)$. We can assume that the
coefficients of $f_P$ are algebraic (\cite[Lemma 23]{Kurth-Long06}).
We choose $f_{-P}$ in a similar way.

\begin{lemma}Let $p$ be a prime not dividing $N(\G^0)$,  $f_P$ and
$f_{-P}$ as above.
  Then at least one of  $(f_P)^{1/p}$ or $(f_{-P})^{1/p}$
  has unbounded denominators.
\end{lemma}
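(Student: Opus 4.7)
The plan is to exploit the algebraic relation on the elliptic curve $X_{\G^0}$ that ties $f_P$ and $f_{-P}$ together via the $x$-coordinate, and then invoke the previous lemma. The key observation is that on the elliptic curve $y^2 = x^3 + Ax + B$, the function $x - x(P)$ has divisor $(P) + (-P) - 2(\infty)$, because $x$ has a double pole at $\infty$ and takes the value $x(P)$ precisely at the two points $\pm P$. Using this, I would compute
\begin{equation*}
\operatorname{div}(f_P \cdot f_{-P}) = \bigl(p(\infty) - p(P)\bigr) + \bigl(p(\infty) - p(-P)\bigr) = -p\bigl((P) + (-P) - 2(\infty)\bigr),
\end{equation*}
which equals $\operatorname{div}\bigl((x - x(P))^{-p}\bigr)$. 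Hence
\begin{equation*}
f_P \cdot f_{-P} = c \cdot (x - x(P))^{-p}
\end{equation*}
for some nonzero constant $c$, which is algebraic because $f_P$, $f_{-P}$ and $x$ all have algebraic Fourier coefficients.

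Next, I would take $p$-th roots on both sides (fixing compatible branches) to obtain
\begin{equation*}
(f_P)^{1/p} \cdot (f_{-P})^{1/p} = c^{1/p} \cdot (x - x(P))^{-1}.
\end{equation*}
Since $p \nmid N(\G^0)$, the earlier discussion via Serre's theorem guarantees that $x(P)$ is not $\wp$-integral for some prime $\wp$ above $p$. The preceding lemma then shows that the right-hand side $(x - x(P))^{-1}$ has unbounded denominators $\wp$-adically; the constant factor $c^{1/p}$ is algebraic and nonzero, so it cannot affect this property.

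Finally, I would conclude by contradiction: if both $(f_P)^{1/p}$ and $(f_{-P})^{1/p}$ had bounded denominators at $\wp$, then so would their product, contradicting what we just showed. Therefore at least one of them has unbounded denominators, as claimed.

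The only real subtlety I anticipate is bookkeeping around the multiplicative constant $c$ and the choice of $p$-th root branches, but since unbounded denominators is a property invariant under multiplication by any fixed nonzero algebraic constant, no genuine obstacle arises there. The substantive input is the divisor identity on the elliptic curve combined with the previous lemma; everything else is formal manipulation.
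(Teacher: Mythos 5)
Your argument is correct and is essentially the paper's own proof: the paper likewise checks divisors to get $(f_Pf_{-P})^{1/p}=(x-x(P))^{-1}$ up to a scalar, invokes the preceding lemma on $(x-x(P))^{-1}$, and concludes that at least one of the two $p$-th roots must have unbounded denominators. You have merely written out the divisor computation and the boundedness-of-products step more explicitly, which is fine.
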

\begin{proof}
By checking the divisors we know that
$(f_Pf_{-P})^{1/p}=(x-x(P))^{-1}$ up to a scalar. By the previous
lemma $(f_Pf_{-P})^{1/p}$ has  unbounded denominators. Thus, at
least one of $(f_P)^{1/p}$ or $(f_{-P})^{1/p}$
  satisfies the unbounded denominator property.
\end{proof}
Without loss of generality we assume that $g_P=\sqrt[p]{f_P}$ has
unbounded denominators $\wp$-adically for some prime $\wp$ above
$p$. So does $(g_P)^{j}$ for any integer $j$ which is relatively
prime to $p$. Therefore,
\begin{lemma}\label{lem:g_P^j}Under the above assumptions,
$(g_P)^{j}$ does not satisfies (FS-AB) for any integer
$j\in\{p+1,p+2,\cdots, 2p-1\}$,
\end{lemma}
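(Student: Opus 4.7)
The plan is to reduce Lemma \ref{lem:g_P^j} directly to the general assertion recorded just before the lemma, namely that $(g_P)^j$ has unbounded $\wp$-adic denominators whenever $\gcd(j,p)=1$. The only new arithmetic input required is the observation that every integer $j$ in the range $\{p+1,p+2,\dots,2p-1\}$ is coprime to $p$: this range sits strictly between the two consecutive multiples $p$ and $2p$ of $p$, so it contains no multiple of $p$ at all. Combined with the algebraicity of the Fourier coefficients of $(g_P)^j$---which follows from the algebraicity of those of $f_P=g_P^p$ by \cite[Lemma 23]{Kurth-Long06} together with the recursive computation of $g_P$ once a branch of the $p$-th root is fixed---this immediately forces $(g_P)^j$ to fail (FS-AB).

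To round out the plan, the only remaining task is to justify the general assertion that $(g_P)^j$ has unbounded $\wp$-denominators for every $j$ coprime to $p$. I would argue by contradiction via B\'ezout: choose integers $a,b\in\ZZ$ with $aj+bp=1$, so that the formal identity
\[
g_P \;=\; g_P^{aj+bp}\;=\;(g_P^j)^{a}\cdot f_P^{b}
\]
holds. Normalize $f_P=c\,w^{p}(1+\varepsilon)$, with $c$ the nonzero algebraic leading Fourier coefficient at $\infty$ and $\varepsilon$ a power series of order $\geq 1$ in $w$; then correspondingly $g_P^j=c^{j/p}w^{j}(1+\varepsilon)^{j/p}$. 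Both $1+\varepsilon$ and $(1+\varepsilon)^{j/p}$ have constant term $1$, so their formal inverses, computed by geometric-series expansion, preserve bounded $\wp$-denominators. If $(g_P)^j$ had bounded $\wp$-denominators, the displayed identity---together with the fact that $f_P$, being a modular function on a congruence group holomorphic away from $\infty$ and $P$, has bounded $\wp$-denominators at almost all $\wp$ (via a mild adaptation of Lemma \ref{lem:A2}, and in any case we may absorb finitely many exceptional primes into $N(\G^0)$)---would propagate the bounded-denominator property to $g_P$ at $\wp$, contradicting the standing hypothesis.

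The main, and essentially only, obstacle is keeping careful track of the two possible signs of $a$ and $b$ in the B\'ezout relation, since the identity requires inverting either $g_P^j$ or $f_P$ whenever one of those exponents is negative. The normalization $f_P=cw^{p}(1+\varepsilon)$ and the parallel expression for $g_P^j$ are precisely what make these inversions routine, through the geometric-series expansions of $(1+\varepsilon)^{-1}$ and $(1+\varepsilon)^{-j/p}$. After that, the remainder of the argument is a bookkeeping of $\wp$-adic valuations of coefficients in products, quotients, and fractional powers of Laurent series of the shape $1+\varepsilon$.
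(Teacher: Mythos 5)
Your overall strategy---reduce the lemma to the assertion that $(g_P)^j$ has unbounded $\wp$-denominators for every $j$ prime to $p$, observe that $\{p+1,\dots,2p-1\}$ contains no multiple of $p$, and prove the assertion via a B\'ezout relation $aj+bp=1$---is exactly the route the paper takes (it records the assertion in the sentence preceding the lemma and treats the lemma as immediate). The gap lies in how you propagate bounded denominators through the identity $g_P=(g_P^j)^a f_P^b$. First, you need $f_P$ itself to have bounded $\wp$-denominators, and you invoke ``a mild adaptation of Lemma~\ref{lem:A2}'' plus absorbing exceptional primes into $N(\G^0)$. But Lemma~\ref{lem:A2} requires the poles to lie at the cusps, whereas $\mathrm{div}(f_P)=p(\infty)-p(P)$, so the pole of $f_P$ sits at the non-cuspidal $p$-torsion point $P$; the lemma does not adapt, and you cannot absorb the offending primes into a fixed $N(\G^0)$ because the relevant prime $\wp$ lies above $p$ and $P$ is a $p$-torsion point, so both vary with $p$. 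Worse, the $\wp$-integrality of $f_P$ is precisely what is in doubt: the engine of this section is that $x(P)$ fails to be $\wp$-integral, and $f_Pf_{-P}$ equals a scalar times $(x-x(P))^{-p}$, whose coefficients have unbounded $\wp$-denominators, so at least one of $f_P$, $f_{-P}$ does too. Second, your claim that the formal inverse of a series $1+\varepsilon$ with constant term $1$ ``preserves bounded $\wp$-denominators'' is false in general: $(1+w/p)^{-1}=\sum_{k\ge 0}(-1)^k p^{-k}w^k$ has unbounded denominators. Inversion is harmless only when $\varepsilon$ is $\wp$-integral, which is not known here, so the ``routine'' bookkeeping of negative B\'ezout exponents does not go through, neither for $f_P^{-1}$ obtained from $f_P$ nor for $(g_P^j)^{-1}$.

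The argument is repaired by choosing the exponents so that no inversion of an unknown series is needed. Take $a\equiv j^{-1}\pmod p$ with $1\le a\le p-1$ and $b=(1-aj)/p\le 0$, so that $g_P=(g_P^j)^a\,(f_P^{-1})^{-b}$ involves only nonnegative powers of $g_P^j$ and of $f_P^{-1}$. Now $f_P^{-1}$ has divisor $p(P)-p(\infty)$: it is holomorphic on the upper half plane, its only pole is at the cusp $\infty$, and its Fourier coefficients are algebraic, so Lemma~\ref{lem:A2} applies to it directly and yields bounded denominators at every prime, in particular at $\wp$. Since products and nonnegative powers of Laurent series with bounded $\wp$-denominators again have bounded $\wp$-denominators, the assumed (FS-AB) property of $g_P^j$ would force $g_P$ to have bounded $\wp$-denominators, contradicting the standing hypothesis. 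With this substitution your proof closes; the coprimality observation and the algebraicity of the coefficients of $g_P^j$ are fine as you state them.
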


\begin{theorem}
  Let $\G^0$ be a genus 1 congruence subgroup whose modular curve has no complex multiplication. Then for almost all primes $p$, every index-$p$ type II(A) character group
  of $\G^0$ satisfies the condition (UBD).
\end{theorem}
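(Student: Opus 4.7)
The plan is to mimic the proof of Theorem \ref{thm:main} almost verbatim, with the single building block $g_P = \sqrt[p]{f_P}$ playing the role of the $p$-th roots of eta-quotients. Fix an index-$p$ type II(A) character group $\G$ of $\G^0$. I would first restrict attention to primes $p$ for which $p \nmid N(\G^0)$ and for which the Galois representation $\varphi_p$ of (\ref{eq:varphi_p}) is surjective (Serre's theorem guarantees this for almost all $p$). For such $p$, $\mathfrak M_\G = \mathfrak M_{\G^0}(g_P)$ for some non-identity $p$-torsion point $P$ of $X_{\G^0}$, and, exactly as in the argument preceding Lemma \ref{lem:g_P^j}, one may assume $P$ has been selected so that $g_P$ itself has unbounded denominators $\wp$-adically for some $\wp \mid p$.

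Let $h$ be an integral-weight modular form for $\G$ that is holomorphic on the upper half plane, has (FS-AB) expansion at infinity, and is not modular for $\G^c=\G^0$. As in Theorem \ref{thm:main}, reduce to a weight-zero modular function by multiplying by a suitable newform of $\G^0$ to push the weight into $12\ZZ$ and then dividing by the appropriate power of $\Delta$. Expand $h = \sum_{i=0}^{p-1} a_i g_P^i$ in the $\mathfrak M_{\G^0}$-basis $\{1,g_P,\ldots,g_P^{p-1}\}$. Averaging over the characters of $\G^0/\G \cong \ZZ/p\ZZ$ (using the Appendix to preserve algebraicity of coefficients, and using that the interior poles of $g_P^i$ all lie above $P$) shows that each $a_i g_P^i$ is holomorphic on the upper half plane with algebraic coefficients, hence $a_i \in \mathfrak M_{\G^0}$ is also holomorphic on the upper half plane with algebraic coefficients and poles only at cusps; Lemma \ref{lem:A2} then gives that each $a_i$ satisfies (FS-AB).

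The only congruence basis element is $1=g_P^0$, so $(h)_c = a_0$ and $(h)_n = \sum_{i=1}^{p-1} a_i g_P^i$; it suffices to prove $(h)_n = 0$. Induct downward on $M((h)_n)$, the number of nonzero $a_i$ with $i\neq 0$. In the base case $M((h)_n)=1$, a single term $a_i g_P^i$ satisfies (FS-AB) together with $a_i$. Since $(g_P^i)^p = f_P^i$ and its reciprocal $f_P^{-i}$ both lie in $\mathfrak M_{\G^0}$ with algebraic coefficients and poles only at cusps, Lemma \ref{lem:A2} gives (FS-AB) for both, and the Lemma 39 machinery of \cite{Kurth-Long06} (applied as in Case 1 of the proof of Theorem \ref{thm:main}) forces $g_P^i$ to satisfy (FS-AB); therefore so does $g_P^{p+i} = f_P \cdot g_P^i$, contradicting Lemma \ref{lem:g_P^j}. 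For $M((h)_n)>1$, invoke the differential operator $\mathcal D$ from Lemma 13 of \cite{Kurth-Long06}: forming $\widetilde h = (b_j - a_j \mathcal D)((h)_n)$ for an appropriate $b_j \in \mathfrak M_{\G^0}$ yields a nonzero element lying entirely in the noncongruence part, still satisfying (FS-AB), and with strictly fewer nonzero coefficients, so induction closes the loop.

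The main obstacle I anticipate lies in the divisor bookkeeping for $a_i$: unlike the eta-quotient building blocks of Theorem \ref{thm:main}, $g_P$ has an interior pole lying over $P$ on $X_\G$, so one must argue carefully that $a_i$ gains a zero at $P$ on $X_{\G^0}$ rather than acquiring a new interior pole somewhere else, in order to apply Lemma \ref{lem:A2}. A secondary point to verify is the exact applicability of Lemma 39 of \cite{Kurth-Long06}, since in this setting $(g_P^i)^p = f_P^i$ is not a modular unit for $\G^0$ but a modular function with a non-cuspidal pole at $P$; I expect that the two applications of Lemma \ref{lem:A2} (to $f_P^i$ and to $f_P^{-i}$) supply enough (FS-AB) input to run the argument, but this is the spot where the analogy with the Type I(A) proof must be checked most carefully.
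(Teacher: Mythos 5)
Your proposal takes essentially the same route as the paper's (very compressed) proof: expand $h$ in the $g_P^i$-basis, use the character decomposition under $\G^0/\G\cong\ZZ/p\ZZ$ and Lemma \ref{lem:A2} to get (FS-AB) for each $a_i$, and run the Theorem \ref{thm:main} machinery (Lemma 13 of \cite{Kurth-Long06} together with the Lemma 39 argument and the operator $\mathcal D$) to force some $g_P^j$ with $j\in\{p+1,\dots,2p-1\}$ to satisfy (FS-AB), contradicting Lemma \ref{lem:g_P^j}. The two worries you flag resolve cleanly and are implicitly handled by the paper: $a_i$ has only cuspidal poles because $g_P^i$ has no zeros in the interior of the upper half plane (its zeros all lie above $\infty$), and Lemma 39 of \cite{Kurth-Long06} is a formal Laurent-series statement needing only that the relevant series satisfy (FS-AB) — supplied by Lemma \ref{lem:A2} applied to $f_P^{\pm i}$ — not that $(g_P^i)^p$ be a modular unit.
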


\begin{proof} Let $p$ be a prime which is relatively prime to $N(\G^0)$ and such that the homomorphism $\varphi_p$ \eqref{eq:varphi_p} is
surjective. Let $\G$ be an index-$p$ type II(A) character group of
$\G^0$.   From \cite[Proposition 25]{Kurth-Long06}, $\frak
M_{\G}=\frak M_{\G^0}(g_P)$ for some $g_P$ as above. We will show
that such a group $\G$ satisfies the condition (UBD). If not, one
can construct a genuine noncongruence modular function $f\in \frak
M_{\G}$ which is holomorphic on the upper half plane and satisfies
(FS-AB) by Lemma \ref{lem:A2}. We can write $f=\sum_{j=0}^{p-1} a_j
g_P^j, a_j\in \frak M_{\G^0}$. Assume $\G^0/\G= \langle \gamma \G
\rangle$. Then $g_P|_{\gamma}=e^{2\pi i/p} g_P$. Since $f|_{\g}$ is
also holomorphic on the upper half plane, so is every $a_j g_P^j$
which is a combination of $f|_{\g^j}$'s. So the poles of the
congruence modular functions $a_j$ are supported at the cusps. Thus
each $a_j$ satisfies (FS-AB) by Lemma \ref{lem:A2}. By Lemma 13 of
\cite{Kurth-Long06} (and the proof of Theorem \ref{thm:main}), for
some $j\in \{p+1,\cdots, 2p-1\}$ ${g_P}^{j}$ satisfies (FS-AB)
 which contradicts Lemma
\ref{lem:g_P^j}.
\end{proof}

\begin{proof}[Proof of Theorem \ref{thm:main2}]
Let $M(\G^0)$ be the  product of  $N(\G^0)$  and all primes $p$ such
that $\phi_p$ is not surjective. Now let $\G$ be an index-$n$ type
II(A) character group of $\G^0$ such that $(n,M(\G^0))=1$. By Lemma
\ref{lem:3}, $\G=\bigcap_{i=1}^s \G_i$ where each $\G_i$ is a type
II(A) character group of $\G^0$ with $\G/\G^i\cong \Z/p^{e_i}\Z$ for
some prime power $p^{e_i}>1$ relatively prime to $M(\G^0)$.  Because
$\frak M_{\G_i}$ is a cyclic extension over $\frak M_{\G^0}$ of
order $p^{e_i}$, it is generated by some modular function $g_i$. Let
$G_i$ be the unique index-$p$ subgroup of $\G^0$ which contains
$\G_i$. By the proof of the previous theorem, $\frak M_{G_i}=\frak
M_{\G^0}(g_P)$ for some modular function $g_P$ as before. Moreover,
we can assume that $g_P$ has unbounded denominators and
$g_i^{p^{e_i-1}}=g_P$. It follows that $g_i$ has unbounded
denominators too.

Like the case in Section \ref{sec:typeI}, the set of modular
functions $S=\left \{\prod_{i=1}^s g_i^{n_i} \right \}_{0\le n_i\le
 p^{e_i}-1}$ is  a basis of $\frak M_{\G}$ over $\frak
 M_{\G^0}$. If $h$ is a modular function for $\G^0$ which is
 holomorphic on the upper half plane and satisfies (FS-AB), then
 following the argument of the proof of Theorem \ref{thm:main} we
 know $h$ must be congruence. (Under our assumption on $n$, $S_n=S$ in this
 case.) This implies  the claim of Theorem \ref{thm:main2}.
\end{proof}

\begin{proof}[Proof of Theorem \ref{thm:main3}]Since the modular curve for
$\G^0(11)$ has no elliptic points, a type II character group $\G$ of
$\G^0(11)$ is   automatically  of type II(A).

We now show that $M(\G^0(11))=5$. By a result of Cojocaru
\cite{Cojocaru05}, when $p>37$, $\phi_p$ is surjective for the
elliptic curve $X_{\G^0(11)}$. Thus it boils down to checking that
the polynomial $\psi_p(x)$ (cf. \eqref{psi}) is irreducible over
$\Q$ when $p\le 37$ and $p\neq 5$, which can be done
computationally. Therefore, Theorem \ref{thm:main2} and
\cite[Theorem 36]{Kurth-Long06} imply that every type II
noncongruence subgroup of $\G^0(11)$ satisfies the condition (UBD),
which is equivalent to the claim of \cite[Conjecture
37]{Kurth-Long06}.


\end{proof}

\section{Appendix}
The goal of this appendix is to show the following proposition used
in the previous proof.
\begin{prop}\label{cor_to_ctok}
Let $f$ be a modular function for $\Gamma$ whose Fourier expansion
about $\infty$ has coefficients in a number field $K$. Then for
every $\gamma \in \SLZ$, the Fourier expansion of $f|_\gamma$ about
$\infty$, or the expansion of $f$ at the cusp $\gamma \cdot \infty$,
also has coefficients in number field $K'$ ($K'$ may be larger than
$K$ in general).
\end{prop}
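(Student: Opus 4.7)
My plan is to first show that $f$ is algebraic over $K(j)$, then argue that each $f|_{\gamma}$ is a root of the minimal polynomial of $f$ over $K(j)$, and finally apply a Puiseux-type theorem for finite extensions of $K((q))$. After multiplying $f$ by a suitable power of $\Delta(z)$, which has integer Fourier coefficients and is $\SLZ$-invariant, I reduce to the case that $f$ has weight $0$.

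For the descent step, since $\frak M_{\G}$ is a finite extension of $\CC(j)$, the function $f$ satisfies a polynomial identity $P(f,j)=0$ for some nonzero $P(X,Y)\in\CC[X,Y]$ whose $X$-degree is bounded by $n=[\SLZ:\G]$ and whose $Y$-degree is bounded by a quantity $D$ determined by the pole orders of $f$ at the cusps of $\G$. I view the $(n+1)(D+1)$ coefficients of $P$ as unknowns in $\CC$. Expanding $P(f,j)$ in the local variable $q^{1/h}$ at $\infty$ and demanding that every Fourier coefficient vanish produces a homogeneous linear system in these unknowns whose matrix entries are polynomial expressions in the Fourier coefficients of $f$ (in $K$) and of $j$ (in $\Z$). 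The system is thus defined over $K$. The true $P$ gives a nonzero element of the $\CC$-solution space $V_\CC$, and since $V_\CC=V_K\otimes_K\CC$, the $K$-solution space $V_K$ is nonzero. This produces a nonzero $P_K\in K[X,Y]$ with $P_K(f,j)=0$, so $f$ is algebraic over $K(j)$; let $m_K(X)\in K(j)[X]$ denote its monic minimal polynomial.

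For any $\gamma\in\SLZ$, both $f$ and $f|_{\gamma}$ lie in $\frak M_{\G^{\mathrm{core}}}$, where $\G^{\mathrm{core}}$ is the largest normal subgroup of $\SLZ$ contained in $\G$. The extension $\frak M_{\G^{\mathrm{core}}}/\CC(j)$ is Galois, and the action $h\mapsto h|_{\gamma}$ realizes $f|_{\gamma}$ as a Galois conjugate of $f$ over $\CC(j)$, so $f|_{\gamma}$ and $f$ share the same minimal polynomial $m(X)\in\CC(j)[X]$. The divisibility $m\mid m_K$ in $\CC(j)[X]$, forced by $m_K(f)=0$, then gives $m_K(f|_{\gamma})=0$. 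Viewing $m_K$ as a polynomial in $K((q))[X]$, the standard structure theorem for finite extensions of the complete discretely valued field $K((q))$ in characteristic $0$ asserts that each such extension has the form $K'((q^{1/e}))$ for some finite $K'/K$ and integer $e\ge 1$; hence every root of $m_K$ lies in such a field. Consequently $f|_{\gamma}$ has Fourier coefficients in a number field $K'$, proving the proposition (replacing $K'$ by a compositum if one wants a uniform field for the finitely many cosets of $\G$).

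The main technical hurdles are (i) the a priori degree bounds on $P(X,Y)$, which come from the index $[\SLZ:\G]$ and pole-order considerations and make the descent problem finite-dimensional, and (ii) the descent itself, which requires the $K$-solution space of a $K$-linear system to be nonzero whenever the $\CC$-solution space is. A minor caveat worth pointing out is that $m_K$ is typically strictly larger than the minimal polynomial $m$ of $f$ over $\CC(j)$ (extending scalars may factor $m_K$ further), but only the divisibility $m\mid m_K$ is needed for the conclusion.
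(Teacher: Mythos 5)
Your proposal is correct but takes a genuinely different route from the paper, and it is worth spelling out the contrast.

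Both proofs share the same first move: produce a polynomial relation $P(f,j)=0$ with $P$ over a number field, and use $j|_\gamma=j$ to conclude $P(f|_\gamma,j)=0$. The paper cites Atkin--Swinnerton-Dyer for the existence of an irreducible $g(x,y)\in\CC[x,y]$ with $g(f,j)=0$ and gestures at ``an elementary argument'' to descend $g$ to a number field $K'$; your linear-algebra descent (homogeneous $K$-linear system from the Fourier-coefficient conditions, degree bounds from the index $[\SLZ:\G]$ and pole orders, and $V_\CC=V_K\otimes_K\CC$) is one clean way to supply that argument. Where the two proofs diverge is the final step. The paper proves from scratch that every Laurent/Puiseux-series solution of $g(f,q)=0$ has coefficients in $K'$: it develops a Fa\`a-di-Bruno-type formula for $\frac{d^M}{dq^M}g(f(q),q)$, then runs an explicit Newton-polygon-style recursion that solves for the coefficients $a_0,a_1,\dots$ one at a time, handling the ramified case by passing to $q^{1/w}$ and the degenerate case $a_1=0$ by a change of variable $x\mapsto x-q$, and observes that every step stays inside $K'$. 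You instead black-box this as the structure theory of finite extensions of the complete discretely valued field $K((q))$ in residue characteristic $0$ (tame ramification plus Hensel). This is shorter and arguably more conceptual, but less self-contained; the paper's version doubles as an algorithm for computing expansions.

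One imprecision in your write-up: a finite extension $L$ of $K((q))$ is not literally of the form $K'((q^{1/e}))$. What tame ramification gives is $L=K'((\pi))$ with $\pi^e=cq$ for some $c\in K'^\times$; to rewrite this as a subfield of a Laurent-series field in $q^{1/e}$ you must enlarge $K'$ to contain $c^{1/e}$ (and possibly $\mu_e$). The corrected statement is that every finite extension of $K((q))$ \emph{embeds} into some $K''((q^{1/e}))$ with $K''/K$ finite, and then a root-counting argument identifies the Puiseux series $f|_\gamma\in\CC((q^{1/h}))$ with one of the roots produced over $K''$. This does not affect the validity of your conclusion, but the structure theorem should be quoted in this weaker (and correct) form.
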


Let $j(z)$ be the classical modular $j$-function. By Theorem 1 of
\cite{a-sd} there is an irreducible polynomial $g(x,y) \in \CC[x,y]$
such that $g(f,j) = 0$. Since both $f$ and $j$ have algebraic
coefficients at infinity, one can use an elementary argument to show
that up to a scalar $g(x,y) \in K' [x,y]$ for some number field
$K'$. Since $j|_{\gamma}=j$ for all $\gamma\in \SLZ$,
$g(f|_{\gamma},j)=0$ for all $\gamma \in \SLZ$. The claim of the
proposition is equivalent to saying every solution of $g(f,j)=0$, as
a formal power series, has algebraic Fourier coefficients. For now
on, we use $g(f,q)$ to denote a polynomial in variable $f$ with
coefficients in the ring of Laurent series in $q$.

\begin{lemma}
Let $M$ be a nonnegative integer. Then:
\begin{equation*}
 \frac{d^M}{dq^M}\left( g(f(q),q)\right) =
 \sum_p\left( c_p\left( \frac{\partial^{n+M-d} g(f,q)}{\partial f^n \partial q^{M-d}}\right)
 \cdot \prod_{i=1}^n \frac{d^{d_i} f}{d q^{d_i}} \right)
\end{equation*}
where the sum ranges over all partitions
\begin{equation*}
p: d_1 + \dots + d_n = d, \ d_i\ge 1
\end{equation*}
of all $d \in [0,M]$ (where the partition of $0$ is empty), and
$c_p$ is a combinatorial constant:
\begin{equation*}
  c_p = \binom{M}{d} \frac{d!}{d_1 ! \cdots d_n!} \cdot
  \frac{1}{\prod_{i=1}^d \#(i\in p)!}
\end{equation*}
\end{lemma}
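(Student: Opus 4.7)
The plan is to proceed by induction on $M$, recognizing the identity as a two-variable instance of the higher chain rule. The base case $M=0$ is immediate: only the pair $d=0$ with the empty partition survives, giving $c_p = 1$, and the right-hand side reduces to $g(f,q)$ itself.

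For the inductive step, I would assume the identity at level $M$ and differentiate both sides with respect to $q$. Each summand on the right decomposes, via the product rule and the two-variable chain rule $\frac{d}{dq} g(f(q),q) = g_f \cdot f' + g_q$, into three kinds of contributions: (a) a $q$-derivative landing directly on the mixed partial $\partial^{n+M-d} g / \partial f^n \partial q^{M-d}$, which leaves $(d,p)$ unchanged while raising the level index from $M$ to $M+1$; (b) the same partial differentiated through the $f$-slot, which produces a new factor $df/dq$ and sends $(d,p) \mapsto (d+1,\, p \cup \{1\})$, appending a fresh part of size $1$; and (c) a derivative landing on one of the existing factors $d^{d_i} f / dq^{d_i}$, which sends $(d,p) \mapsto (d+1,\, p')$ with $p'$ obtained from $p$ by incrementing a chosen part.

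The substance of the argument is then a combinatorial verification: for each target pair $(d', p')$ indexing the level-$(M{+}1)$ sum, I would collect all of its preimages under the three operations above and check that the sum of their coefficients $c_p$ equals the predicted $c_{p'}$. Classifying each part of $p'$ of size $k$ by whether it came from a fixed part of size $k$ at level $M$ under (a), from an incremented part of size $k-1$ under (c), or (only if $k = 1$) from a freshly appended part under (b), the check reduces to a Pascal-type identity among binomial and multinomial coefficients.

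The main obstacle will be this combinatorial bookkeeping, particularly the symmetry factor $1/\prod_i \#(i \in p')!$: identical parts of $p'$ are indistinguishable, so the number of distinct preimages under (b) and (c) depends on the multiplicity profile of $p'$, and this count must exactly cancel the multinomial factors contributed at level $M$ so that the three sums telescope into the single coefficient $\binom{M+1}{d'} \cdot \frac{d'!}{d'_1! \cdots d'_{n'}!} \cdot \frac{1}{\prod_i \#(i \in p')!}$. Once this identity is checked, the inductive step closes and the formula holds for all $M$.
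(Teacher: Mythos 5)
Your proposal is correct and follows essentially the same inductive framework as the paper's proof: differentiate once more, observe that each term at level $M$ spawns exactly three kinds of terms at level $M+1$ corresponding to (a) leaving the partition fixed, (b) appending a new part of size $1$, and (c) incrementing an existing part. The paper states exactly this ``splits $p$ up into $n+2$ partitions'' classification.

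The one place you genuinely diverge is in how the coefficient $c_p$ is established. You propose a local verification: for each target $(d',p')$ at level $M+1$, gather all level-$M$ preimages under the three operations and check that their contributions sum to $c_{p'}$, i.e.\ verify a Pascal-type recurrence. The paper instead avoids this recursion entirely and gives a direct, closed-form combinatorial interpretation of $c_p$: it counts the number of length-$M$ sequences of the three moves that build $p$ starting from the empty partition, reading $\binom{M}{d}$ as the choice of which steps are idle (type (a)), the multinomial $d!/\prod d_i!$ as the number of ways to order the $d$ increment steps distributed among the parts, and $1/\prod_i \#(i\in p)!$ as the correction for indistinguishable parts of equal size. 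Both routes are sound and essentially dual to one another. Your local Pascal check would require the careful multiplicity bookkeeping you correctly flag as the main obstacle, whereas the paper's global counting sidesteps that bookkeeping at the cost of needing to correctly justify the path-count formula in one shot. If you pursue your version, be explicit that when several parts of $p'$ are equal, the number of \emph{distinct} preimages under (c) is smaller than $n'$, and confirm this shortage is exactly compensated by the shift in the $1/\prod_i \#(i\in p)!$ factor between $p$ and $p'$.
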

\begin{proof}

First note that:
\begin{equation*}
\frac{d}{dq}\left( \frac{\partial^{a+b} g(f,q)}{\partial f^a
\partial q^b}\right) = \frac{\partial^{a+b+1} g}{\partial f^{a+1}
\partial q^b} \frac{df}{dq} + \frac{\partial^{a+b+1} g}{\partial f^a
\partial q^{b+1}}
\end{equation*}
We claim that every term of $\displaystyle \frac{d^m}{dq^m}
(g(f(q),q))$ is of the form:
\begin{equation*}
\frac{\partial^{n+m-d} g}{\partial f^n \partial q^{m-d}} \cdot
\prod_{i=1}^n \frac{d^{d_i} f}{d q^{d_i}}
\end{equation*}
where $d_1 + \dots + d_n = d \leq m$.

Suppose this is true for $m$. Then $\displaystyle \frac{d}{dq}\left(
\frac{\partial^{n+m-d} g}{\partial f^n
\partial q^{m-d}} \cdot \prod_{i=1}^n \frac{d^{d_i} f}{d q^{d_i}} \right)$
has three types of terms, corresponding to new partitions:
\begin{eqnarray*}
 \frac{\partial^{n+m-d+1} g}{\partial f^{n+1} \partial q^{m-d}} \cdot
 \prod_{i=1}^{n+1} \frac{d^{d_i} f}{d q^{d_i}}
  &\qquad \text{for $p: d_1 + \dots + d_n + 1$}\\
 \frac{\partial^{n+m-d+1} g}{\partial f^n \partial q^{m-d+1}} \cdot
 \prod_{i=1}^n \frac{d^{d_i} f}{d q^{d_i}}
  &\qquad \text{for $p: d_1 + \dots + d_n$}\\
 \frac{\partial^{n+m-d} g}{\partial f^n \partial q^{m-d}} \cdot
 \prod_{i=1}^n \frac{d^{d_i} f}{d q^{d_i}}
  &\qquad \text{for $p: d_1 + \dots + (d_j + 1) + \dots + d_n$}\\
\end{eqnarray*}
where for the last type, there is one for each $1\leq j \leq n$.

Thus differentiation on a term corresponding to a partition $p$
splits $p$ up into $n+2$ partitions: $p$ itself, $p$ appending
``+1'', and all terms $p$ with $1$ added to one of the elements of
$p$.

To get the combinatorial coefficient, we count how many ways to get
to a partition $p$ in $M$ steps using the three rules above. If
$M>d$ there are steps where $p$ doesn't change, and they can be put
in any order, hence the $\binom{M}{d}$ term in $c_p$. The remaining
steps consist of adding $+1$ to the $d_i$'s, hence the multinomial
coefficient, and the remaining term is to remove any overlap in
counting when $d_i = d_j$ for some $i$ and $j$.
\end{proof}

Let $g(x,q)$ be a degree $N$ polynomial (in $x$) with coefficients
in $K[[q]]$ for some field $K$:
\begin{eqnarray*}
g(x,q) &=& \sum_{i=0}^N g_i(q) x^i=\sum_{j=P}^\infty h_j(x)q^j\,.
\end{eqnarray*}
We want to find $f(q)$ such that $g(f(q),q) = 0$. If the order of
$f(q)$ at $\infty$ is $Q$ then $q^{-Q} f(q)$ is holomorphic and
non-zero at $\infty$, and it satisfies $\overline{g}(q^{-Q}f(q),q) =
0$ where
\begin{equation*}
  \overline{g}(x,q) = \sum_{i=0}^N (g_i(q)\cdot q^{Qi}) x^i\,.
\end{equation*}
So, in solving for $f(q)$, we can adjust the $h_j$ polynomials and
assume $f$ is holomorphic and non-zero at $\infty$. Moreover, we can
assume $P = 0$ (and hence each $g_i(q)$ is holomorphic at $\infty$)
since we can multiply powers of $q$ to both sides of $g(f(q),q) =
0$. Let:
\begin{equation*}
f(q) = \sum_{i=0}^\infty a_i q^i\,.
\end{equation*}
We will plug these series into the Lemma. Note that:
\begin{equation*}
 \frac{\partial^{a+b}g(f,q)}{\partial f^a \partial q^b} =
 \sum_{j=b}^\infty h_j^{(a)}(f) j(j-1)\dots (j-b+1) q^{j-b}\,.
\end{equation*}
So:
\begin{equation*}
\left.\frac{\partial^{a+b}g(f,q)}{\partial f^a \partial
q^b}\right|_{q=0} = b! h_b^{(a)}(a_0)\,.
\end{equation*}
Similarly:
\begin{equation*}
  \left.\frac{d^b f}{dq^b}\right|_{q=0} = b! a_b\,.
\end{equation*}

Now let $Q_M$ be the $M$th coefficient of $g(f(q),q)$. Since
\begin{equation*}
  Q_M = \left. \frac{1}{M!} \frac{d^M}{dq^M}
  g(f(q),q)\right|_{q=0}\,,
\end{equation*}
putting it all together we have:
\begin{eqnarray*}
  Q_M &=&\frac{1}{M!} \sum_p\left. \left( c_p \frac{\partial^{n+M-d}
  g}{\partial f^n \partial q^{M-d}}
  \cdot \prod_{i=1}^n \frac{d^{d_i} f}{d q^{d_i}} \right)\right|_{q=0}\\
      &=&\frac{1}{M!} \sum_p\left( \binom{M}{d} \frac{d!}{d_1 ! \cdots d_n!} \cdot
  \frac{1}{\prod_{i=1}^d \#(i\in p)!} \cdot
  (M-d)! h_{M-d}^{(n)}(a_0) \prod_{i=1}^n d_i! a_{d_i} \right)\\
      &=& \sum_p \frac{1}{\prod_{i=1}^d \#(i\in p)!}
          h_{M-d}^{(n)}(a_0) \prod_{i=1}^n a_{d_i}\,.
\end{eqnarray*}

For example:
\begin{eqnarray*}
  Q_4 &=& a_4 h'_0(a_0) + a_3 a_1 h''_0(a_0) + a_3h'_1(a_0) + \frac{1}{2}
  a_2^2 h''_0(a_0) + \frac{1}{2}a_2 a_1^2 h'''_0(a_0) + \\
  && + a_2 a_1 h''_1(a_0) + a_2 h'_2(a_0) + \frac{1}{24} a_1^4
  h''''_0(a_0) + \frac{1}{6} a_1^3 h'''_1(a_0) +\\
  && + \frac{1}{2} a_1^2 h''_2(a_0) + a_1 h'_3(a_0) + h_4(a_0)\,.
\end{eqnarray*}

We solve $g(f(q),q)=0$ for the $a_i$'s. Since $Q_0 = h_0(a_0)$, we
pick $a_0$ to be any non-zero root of $h_0$, an (at most) $N$th
degree polynomial. If $a_0$ is a simple root we will see that we can
successively solve each $a_i$. Suppose however that $h_0^{(i)}(a_0)
= 0$ for all $i \in \{0, 1, \dots, w-1\}$ and $h_0^{(w)}(a_0) \neq
0$. If $w>1$ then the $a_i$'s cannot be solved. In this case,
instead let $f = \sum a_i q^{i/w}$. Then replace $q^{1/w}$ with $q$
and re-index $h_j$ as $h_{jw}$ and $h_j = 0$ whenever $j \not\equiv
0$ mod $w$. So we have:
\begin{eqnarray*}
  f(q) &=& \sum a_i q^i\,,\\
  g(x) &=& \sum h_{jw}(x) q^{jw}\,.\\
\end{eqnarray*}
Then $Q_i = 0$ for all $i$ from $1$ to $w-1$, because each of their
terms contains either $h_0^{(j)}(a_0)$ for $j \in [0,w-1]$ or $h_j$
for $j \in [1,w-1]$. The next non-zero term is:
\begin{equation*}
Q_w = h_w(a_0) + a_1^w h_0^{(w)}(a_0)\,.
\end{equation*}

So we solve
\begin{equation*}
 a_1^w = \frac{-h_w(a_0)}{h_0^{(w)}(a_0)}\,.
\end{equation*}

There are two cases:

\emph{Case 1:} If $a_1 \neq 0$ then there are exactly $w$ choices
for $a_1$ and they all differ by an $w$th root of unity. Moreover,
all subsequent $a_i$'s are uniquely determined because for example:
\begin{equation*}
  Q_{w+1} = a_1 h'_w(a_0) + a_1^{w+1}h_0^{(w+1)}(a_0) + a_1^{w-1}a_2
  h_0^{(w)}(a_0)
\end{equation*}
can be solved for $a_2$. And more generally:
\begin{equation*}
  Q_{w+c} = a_1^{w-1} a_{c+1} h_0^{(w)}(a_0) + (\text{terms with
  all $a_i$'s having $i \leq c$}).
\end{equation*}
So we can solve for each $a_{c+1}$.

(In general, when calculating $Q_M$, the partitions that give
(possibly) non-zero terms are partitions $d_1 + \dots + d_n = d$
such that (1) $d \leq M$, (2) $M \equiv d \mod N$, and (3) $n\geq N$
if $M=d$.)

\emph{Case 2:} On the other hand, if $a_1 = 0$, let $\overline{f}(q)
= f(q) + q$ and $\overline{g}(x,q) = g(x-q,q)$. The coefficients of
$\overline{f}$ and $f$ are the same except the $q$-term is non-zero,
and $\overline{g}(\overline{f}(q),q)=0$. If we repeat the above
process on $\overline{g}$ and $\overline{f}$ we go into Case 1 and
get a sequence for $\overline{f}(q)$, and hence $f(q)$. There is
some reindexing involved in this, but note that the ``cusp width''
$w$ remains the same after the reindexing, because replacing $x$
with $x-q$ in $g(x,q) = \sum h_j(x) q^j$ does not change the
$h_0(x)$ term. That is to say, if
\begin{equation*}
  \overline{g}(x,q) = \sum_{j=0}^\infty \overline{h}_j(x) q^j
\end{equation*}
then $\overline{h}_0(x) = h_0(x)$.

Note that in this recursive solving process, we stay in the field
$K'$. That is to say, $a_i \in K'$ for all $i$. This proves
Proposition \ref{cor_to_ctok}.

\section*{Acknowledgements}
The authors would like to thank Prof. Wen-Ching Winnie Li and Prof.
Yifan Yang for their valuable inputs.


\providecommand{\bysame}{\leavevmode\hbox
to3em{\hrulefill}\thinspace}
\providecommand{\MR}{\relax\ifhmode\unskip\space\fi MR }
\providecommand{\MRhref}[2]{%
  \href{http://www.ams.org/mathscinet-getitem?mr=#1}{#2}
} \providecommand{\href}[2]{#2}

\end{document}